\newtheorem{theorem}{Theorem}[section]
\newtheorem{lemma}[theorem]{Lemma}
\newtheorem{corollary}[theorem]{Corollary}
\newtheorem{conjecture}[theorem]{Conjecture}
\newtheorem{question}[theorem]{Question}
\newcommand{\C}{\textsc{Cops}\xspace}
\newcommand{\R}{\textsc{Robber}\xspace}
\newcommand{\CnR}{\textsc{Cops and Robber}\xspace}
\theoremstyle{definition}
\newtheorem{assert}[theorem]{Assertion}
\theoremstyle{remark}
\title{On the cop number of toroidal graphs}
\author{Florian Lehner\thanks{The author was supported by the Austrian Science Fund (FWF) Grant no.\ J 3850-N32}}
\begin{document}

\maketitle
\begin{abstract}
    We show that the cop number of toroidal graphs is at most $3$. This resolves a conjecture by Schroeder from 2001 which is implicit in a question by Andreae from~1986.
\end{abstract}

\section{Introduction}

\CnR is a pursuit--evasion game played on a graph between two players. Originally introduced independently by Nowakowski and Winkler \cite{nowakowskiwinkler-copsrobbers}, and Quilliot \cite{quilliot-copsrobbers}, this game and variants thereof have become a quickly growing research area within graph theory. The book \cite{bonatonowakowski-book} provides an extensive introduction to the topic. 

The variant considered in this paper was first studied by Aigner and Fromme \cite{aignerfromme-planar} and can be described as follows. Initially, the first player, called \emph{\C}, places $k$ cops\footnote{Throughout this note, we use \C to refer to the player, and cops to refer to the playing pieces of that player on the graph. An analogous distinction is made between \R and robber.} on the vertices of a graph $G$. Then the second player, called \emph{\R}, places a robber on a vertex. Then the two players take turns. On \C' turn, each cop can either be moved to an adjacent vertex or left at the current position. On \R's turn, the robber can either be moved to an adjacent vertex or left where he is. Both players have perfect information, that is, they know the other player's moves and possible strategies. \C wins the game if at some point one of the cops is at the same vertex as the robber, in this case we say that the robber is caught.

One of the most studied questions concerning this game is whether for some given $k$ there is a winning strategy for \C using $k$ cops. The \emph{cop number} of a graph $G$, denoted by $c(G)$, is the least positive integer $k$ for which \C has a winning strategy. The most famous open problem in this context is Meyniel's conjecture, stating that the cop number of any graph on $n$ vertices is at most $O(\sqrt n)$. If true, this is asymptotically tight since there are graph classes meeting this bound. However, not even an upper bound of the form $O(n^{1-\epsilon})$ is known, see \cite{bairdbonato-meyniel} for an overview.

Bounds for the cop number have also been studied in certain graph classes, with an increased recent interest in graph classes defined by topological invariants, see for example the survey \cite{bonatomohar-topological}. Andreae \cite{andreae-excludedminor} showed that for any fixed graph $H$ there is a constant upper bound on the cop number of connected graphs with no $H$-minor. It follows that there is a constant upper bound on the cop number of connected graphs of genus $g$. In his paper, Andreae also poses the question of finding sharp bounds for the cop number of such graphs in terms of $g$. 

So far, such a bound is only known for $g=0$. Aigner and Fromme \cite{aignerfromme-planar} showed that on any connected planar graph \C has a winning strategy using $3$ cops, and there are planar graphs (such as the dodecahedron) for which $3$ cops are necessary. 
For toroidal graphs $G$, Quilliot \cite{quilliot-genus} proved an upper bound of $c(G) \leq 5$, and Andreae \cite{andreae-excludedminor} asked whether this could be improved to $c(G) \leq 3$. Schroeder \cite{schroeder-genus} improved Quilliots bound to $c(G) \leq 4$, and explicitly stated the conjecture implicit in Andreae's question.

\begin{conjecture}[Andreae, Schroeder]
Let $G$ be a finite toroidal graph, then $c(G) \leq 3$.
\end{conjecture}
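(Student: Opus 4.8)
The plan is to combine Aigner and Fromme's technique for planar graphs with a genus-reduction step specific to the torus. The backbone of the planar argument is the \emph{guarding lemma}, which I would use as a black box (and reprove if needed): if $P$ is an isometric path of $G$, then after a bounded number of moves a single cop can move onto $P$ and keep ``shadowing'' the robber's nearest-point projection onto $P$, so that from then on the robber can never step onto a vertex of $P$ without being caught. Aigner and Fromme use this to obtain $c(G)\le 3$ for connected planar $G$: they maintain the invariant that the robber is confined to a region of the plane bounded by at most two guarded isometric paths with common endpoints --- a ``bigon'' --- while a third, free cop guards a new isometric path that splits the current bigon into two strictly smaller bigons; the robber is trapped in one of them, this frees one of the two holding cops, and iterating shrinks the robber territory until it is empty.

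First I would dispose of the easy cases. We may assume $G$ is connected; if $G$ embeds in the sphere then $c(G)\le 3$ by Aigner--Fromme, so we may assume $G$ has a cellular embedding in the torus and, in particular, a non-contractible cycle. The heart of the argument is then to spend one cop on a topological reduction. Let $C$ be a shortest non-contractible cycle of $G$. I would show that a single cop can guard $C$: a shortest non-contractible cycle is ``locally geodesic'' --- the shorter of the two arcs of $C$ between any two of its vertices is a shortest path of $G$, for otherwise one could shortcut $C$ into a shorter non-contractible cycle --- and this is exactly the property the shadowing strategy needs. Cutting the torus along $C$ produces an annulus whose two boundary circles are both copies of $C$; since the guarding cop physically sits on $C$, it simultaneously blocks both boundary circles, so once this cop is in position the robber is confined to the open annulus and can never ``wrap around''.

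It then remains to catch a robber confined to a graph embedded in an annulus with both boundary circles guarded by one cop, using the two remaining cops. For this I would run an Aigner--Fromme-type carving adapted to the annulus: a second cop guards an isometric arc $P$ crossing the annulus from one boundary circle to the other (such a $P$ exists since $G$ is cellularly embedded --- e.g.\ a suitable shortest cycle in a homotopy class independent of $[C]$, cut open by $C$), which slices the annular territory into a disk bounded by the two copies of $P$ and the two copies of $C$. The third, free cop then guards isometric arcs that progressively carve this disk, the invariant being chosen so that at every stage the robber's region is a disk whose boundary is guarded by at most two of the three cops --- with the two copies of $C$ always counting as ``one side'' held by the first cop --- leaving the third free to continue carving, so that the territory strictly shrinks and the process terminates.

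The step I expect to be the real obstacle is precisely this bookkeeping with only three cops. The naive version --- guard one non-contractible cycle, then a second independent generator, then run the planar argument inside --- leaves the robber in a region with \emph{four} guarded boundary arcs, which ties up four cops and merely reproduces Schroeder's bound $c(G)\le 4$ (and Quilliot's $5$ is what even cruder spending gives). To push down to $3$ one must exploit that the two boundary circles of the annulus are the \emph{same} cycle $C$, hence guarded for free by a single cop, and choose the carving arcs so that the robber's region never has more than two guarded sides; one must also carefully handle the moments when the carving cop moves from one arc to the next, during which the robber could try to slip around the annulus, so that no transition costs an extra cop and the territory provably decreases. Making every transition ``cop-neutral'' while guaranteeing termination is where the genuine work lies; the remaining ingredients are assembled from the guarding lemma and the Aigner--Fromme scheme.
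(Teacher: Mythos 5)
The central step of your plan---``a single cop can guard a shortest non-contractible cycle $C$''---is a genuine gap, and it is precisely the step on which the direct surface-cutting approach has always foundered. Your observation that a shortest non-contractible cycle is locally geodesic (each arc of at most half its length is a shortest path) is correct, but the Aigner--Fromme shadowing lemma applies to shortest \emph{paths}, not to isometric \emph{cycles}: the nearest-point projection onto a cycle is not $1$-Lipschitz in the robber's moves, and the robber can use the second generator of the torus to change which side of $C$ he threatens, so that his projection jumps or drifts around $C$ faster than a single cop can follow. Controlling this winding is exactly what the second cop is for in the known results: reducing the genus by one costs \emph{two} cops in general (Quilliot; see Lemma~\ref{lem:genusreduction}), and Schroeder's refinement is a dichotomy (one cop reduces the genus by $1$ \emph{or} three cops reduce it by $2$) precisely because one cop does not always suffice to guard the relevant cycle. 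Indeed, if one cop could always guard a shortest non-contractible cycle, then cutting along such a cycle and inducting would immediately prove Schroeder's full conjecture $c(G)\le g+3$ (Conjecture~\ref{con:schroeder}), which remains open; so this step is not a routine adaptation of the guarding lemma but would itself be a major new result, and you give no argument for it beyond the (insufficient) isometry of $C$. The second stage of your plan inherits the same problem in milder form: the ``isometric arc crossing the annulus'' must be guardable by one cop inside the robber's territory, and a shortest cycle in the complementary homotopy class, cut open along $C$, need not be an isometric path of the graph the robber is actually confined to; you yourself flag the bookkeeping of the carving invariant as open, but the difficulty starts one step earlier.

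For contrast, the paper avoids cutting the torus altogether. It lifts the game to an infinite planar cover $\hat G$ of $G$ and plays a strengthened game there in which cops may teleport within fibres of the covering map ($T$-Cops and Robber); Lemma~\ref{lem:copnumber-cover} shows $c(G)\le c^T(\hat G)$. In the cover, the objects to be guarded are honest shortest paths emanating from the robber's start vertex, each guarded by a single (teleporting) cop via Lemma~\ref{lem:guardpath}, and a halving argument over the fan of such paths---made finite by the polynomial growth of the cover---traps the robber in a region where the planar Lemma~\ref{lem:planarwin} finishes the game. The teleportation power is exactly what substitutes for the missing ``one cop guards a non-contractible cycle'' in your outline; without it (or some new idea in its place), your argument does not go through.
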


In this short note we prove this conjecture. This is done by relating \CnR on a graph $G$ to a similar game with more powerful \C (which we call $T$-\CnR) on a cover of $G$. We note that similar ideas have been used in \cite{clarceetal-nonorientable}, but without increasing the \C' power which is crucial for our proof to work. 

As a corollary to our main result, we are able to make progress on the following conjecture of Schroeder \cite{schroeder-genus}.

\begin{conjecture}[Schroeder]
\label{con:schroeder}
Let $G$ be a finite graph of genus $g$, then $c(G) \leq g+3$.
\end{conjecture}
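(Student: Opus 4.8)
The plan is to prove the bound $c(G)\le g+3$ by induction on the genus $g$, with base case $g=0$ supplied by the theorem of Aigner and Fromme that connected planar graphs have cop number at most $3$, and with each inductive step peeling off one handle at the cost of a single extra cop. (Starting the induction instead at $g=1$ with the main theorem of this paper would even give $c(G)\le g+2$ for connected $G$ with $g\ge 1$.)

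So let $G$ be connected of genus $g\ge 1$ and fix a minimum-genus embedding of $G$ into the orientable surface $S_g$; such an embedding is cellular, and since $g\ge 1$ it carries a non-contractible cycle. Let $C$ be a shortest non-contractible cycle; one checks that $C$ is then isometric in $G$ (a shorter path between two of its vertices would combine with one of the two arcs of $C$ into a shorter non-contractible closed walk, one of which must again be non-contractible). The crux of the argument would be a \emph{genus-reducing guarding lemma}: a single cop can reach $C$ in finitely many moves and thereafter shadow it in the Aigner--Fromme sense, so that the robber can never enter $V(C)$. Granting this, once the guarding cop is in position the robber is trapped in $G-V(C)$. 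Cutting $S_g$ open along $C$ gives, when $C$ is non-separating, a genus-$(g-1)$ surface with two boundary circles, and when $C$ is separating but non-contractible, two surfaces of genera $a,b\ge 1$ with $a+b=g$; in all cases $G-V(C)$ embeds into a surface of genus at most $g-1$. By the induction hypothesis $(g-1)+3$ further cops catch the robber there, so $g+3$ cops suffice on $G$.

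The single-cop genus-reducing guarding lemma is the real obstacle, and it is precisely where the new ideas of this paper have to be pushed further. The classical Aigner--Fromme lemma shadows an isometric \emph{path} with one cop, so splitting $C$ at two near-antipodal vertices into two geodesic arcs guards it with \emph{two} cops, and the induction then only yields $c(G)\le 2g+3$ — weaker than Schroeder's known bound $\lfloor 3g/2\rfloor+3$. To bring this down to one cop per handle one must genuinely use the embedding: the freedom the robber would exploit to slip around an unguarded arc of $C$ should already be controlled by lifting the game to the planar universal cover $\widetilde G\subseteq\widetilde{S_g}$ and playing $T$-\CnR there with the deck transformation group $T=\pi_1(S_g)$, on which — as in the planar case — three sufficiently powerful cops win. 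The difficulty, compared with the toroidal argument, is that for $g\ge 2$ the group $T$ is a non-abelian surface group rather than $\mathbb Z^2$, so the passage ``simulate the $T$-power of the cops back on $G$ itself'' is no longer free; making this simulation cost exactly one ordinary cop per handle — rather than one per generator, which would give about $2g+3$, or nothing, which would absurdly give $3$ — is the heart of the problem, and presumably why Conjecture~\ref{con:schroeder} is still open in general.
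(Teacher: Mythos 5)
Your proposal does not prove the statement, and you essentially concede this yourself: the ``single-cop genus-reducing guarding lemma'' on which the whole induction rests is not established anywhere in your argument, and it is not a known result --- it is, for all practical purposes, equivalent to the open problem. The Aigner--Fromme guarding lemma (and Lemma~\ref{lem:guardpath} here) applies to a \emph{shortest path}, and the standard way to guard a shortest non-contractible cycle is to split it into two geodesic arcs, costing two cops per handle; this is exactly the content of the first part of Lemma~\ref{lem:genusreduction} and only yields $c(G)\leq 2g+3$, weaker than the bounds of Schroeder and of Bowler et al.\ quoted in the introduction. Note also that your diagnosis of where the higher-genus cover argument breaks is slightly off: by Lemma~\ref{lem:copnumber-cover}, simulating the teleportation power on $G$ is free for \emph{any} cover, abelian deck group or not, so there is no ``cost per generator'' to pay there. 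The real obstruction is in proving $c^T(\hat G)\leq 3$ for the universal cover when $g\geq 2$: the proof of Theorem~\ref{thm:main} needs the cover to be planar \emph{and} of polynomial growth, since the binary search over the boundary vertices $v_i$ halves an interval per round at a drift cost of $|V|$ in distance, and this requires choosing $D$ with $D/|V| > \log|B_{r_0}(D)|$. The universal cover of a graph on a surface of genus $g\geq 2$ has exponential growth (the deck group is a hyperbolic surface group), so no such $D$ exists and the round-counting argument collapses; intermediate (e.g.\ cyclic) covers with polynomial growth are no longer planar. Your parenthetical claim that starting the induction at $g=1$ would give $c(G)\leq g+2$ is likewise unsupported, as it presupposes the same missing lemma.

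For the record, the paper does not prove this conjecture in general either: it remains open, and Mohar has conjectured that the bound is not even asymptotically tight. What the paper establishes is Corollary~\ref{cor:schroedercon}, the cases $g\leq 3$, by combining Theorem~\ref{thm:main} with the known genus-reduction strategies of Lemma~\ref{lem:genusreduction} (reduce genus by $1$ using $2$ cops; or by $1$ using $1$ cop or by $2$ using $3$ cops), rather than by any induction peeling off one handle per cop. If you want a correct contribution along your lines, the realistic targets are either a proof that some non-contractible cycle can always be guarded by one cop (which would indeed settle the conjecture), or an extension of the $T$-\CnR cover technique to covers of subexponential growth; both are open.
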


The best known general bound is $c(G) \leq \frac 43 g+ \frac {10}3$ , proved in \cite{bowleretal-genus}, but so far the conjecture is only known to hold for $g \leq 2$. We give a simpler proof for the case $g=2$, and prove the case $g=3$.

While this confirms Conjecture \ref{con:schroeder} for $g \leq 3$, the bound is only known to be tight for $g=0$. Tightness fails for $g=1$ by our main result, thus raising the following question. 

\begin{question}
Is there any graph with genus $g > 0$ and cop number equal to $g+3$?
\end{question}

Maybe even more fundamentally, we do not know whether the bound in Conjecture~\ref{con:schroeder} is asymptotically tight (Mohar in \cite{mohar} conjectured that it is not), which shows how little is known about the interplay between the genus and the cop number of a graph. In fact, to our best knowledge even the following question is still open.

\begin{question}
What is the smallest $g$ such that there is a graph with genus $g$ and cop number $4$?
\end{question}

\section{Preliminaries}
Throughout this paper, let $G = (V,E)$ be a graph. All graphs considered are simple, undirected, and locally finite (every vertex only has finitely many neighbours). 

An \emph{embedding} of $G$ on a surface $S$ assigns to each vertex $v$ a point $p_v$ on $S$ and to each edge $e = uv$ an arc $a_e$ connecting $p_u$ to $p_v$ such that
\begin{enumerate}
    \item the points $(p_v)_{v \in V}$ are distinct,
    \item the arcs $(a_e)_{e \in E}$ are internally disjoint, and
    \item no point $p_v$ lies in the interior of an arc $a_e$.
\end{enumerate}
Clearly, given a set of points and arcs on a surface with the above properties we can find a graph with this embedding. Call an embedding \emph{accumulation free}, if the set $\{p_v\mid v \in V\}$ has no accumulation points. A graph is called \emph{planar} if it has an embedding in the plane $\mathbb R^2$ and \emph{toroidal} if it has an embedding in the torus $\mathbb T^2 = \mathbb R^2 / \mathbb Z^2$.

Let $d$ denote the usual graph distance on $V$, that is, $d(u,v)$ is the length of a shortest path from $u$ to $v$. For $v\in V$ and $r\in \mathbb N$ we define the \emph{ball around $v$ with radius $r$} by $B_v(r) = \{w \in V \mid d(v,w) \leq r\}$. The ball with radius $1$ around $v$ is called the \emph{closed neighbourhood of $v$} and denoted by $N[v]$. A graph $\hat G = (\hat V,\hat E)$ is a \emph{cover} of $G$, if there is a surjective map $\phi: \hat V \to V$ such that $\phi$ is a bijection from $N[v]$ to $N[\phi(v)]$ for every $v \in \hat V$. The map $\phi$ is called a \emph{covering map}. The \emph{growth function} of $G$ around $v$ is the function $g\colon \mathbb N \to \mathbb N$ defined by $g(n) = |B_v(n)|$. We say that a graph has \emph{polynomial growth}, if the growth function around some (or equivalently any) of its vertices is upper bounded by a polynomial.

The \CnR game on $G$ with $k$ cops is a game played on $G$ between two players, who are called \C and \R respectively. In the beginning of the game, \C picks $(c_0^1,c_0^2,\dots,c_0^k) \in V^k$, then \R picks $r_0 \in V$. In each subsequent turn~$n$, \C picks $c_n^i \in N[c_{n-1}^i]$, then \R picks $r_n \in N[r_{n-1}]$. \C wins the game, if $c_{n+1}^i = r_n$ or $c_n^i=r_n$ for some $n \in \mathbb N$ and some $1 \leq i \leq k$. Note that an optimally playing \R can make sure that the latter option does not happen first, whence we could also insist on $c_{n+1}^i = r_n$ as a winning criterion. The \emph{cop number} $c(G)$ is the least $k$ such that \C has a winning strategy.

Intuitively, we think of the $c_n^i$ and $r_n$ as the position of playing pieces on the graph, \C' playing pieces are thought of as $k$ cops, \R's piece is thought of as a robber. Using this intuition, the winning criterion for \C says that some cop catches the robber by moving to the same vertex. We say that a subgraph $H$ of $G$ is \emph{$i$-guarded at time $n$}, if $r_n \in H$ implies that $c_{n+1}^i = r_n$. Intuitively this means that \C is using the $i$-th cop to make sure that the robber cannot move to $H$ without being caught. Call a subgraph $H$ \emph{guarded}, if it is $i$-guarded for some $i \leq k$.

\section{Main result}

The following result is almost trivial and probably known, but we couldn't find a reference for it in the literature which is why we provide a proof sketch for the convenience of the reader. 

\begin{lemma}
    \label{lem:planarcover}
If $G$ be a finite toroidal graph, then there is an infinite planar cover $\hat G$ of $G$ with polynomial growth. Moreover, $\hat G$ has an accumulation free embedding in $\mathbb R_2$.
\end{lemma}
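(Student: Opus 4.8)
The idea is to pass to the universal-cover-like construction coming from the embedding of $G$ on the torus $\mathbb{T}^2 = \mathbb{R}^2/\mathbb{Z}^2$. Fix an accumulation-free embedding of $G$ on $\mathbb{T}^2$ (a finite graph on a compact surface trivially has one). Let $\pi\colon \mathbb{R}^2 \to \mathbb{T}^2$ be the standard covering projection. Pull the embedding back along $\pi$: define $\hat G$ to have vertex set $\pi^{-1}(\{p_v : v \in V\})$ and, for each edge $e = uv$ of $G$ with arc $a_e$, one arc for each connected component of $\pi^{-1}(a_e)$ (each such component is an arc joining two vertices of $\hat G$, since $\pi$ is a covering map and $a_e$ is simply connected). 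This produces an embedded graph $\hat G$ in $\mathbb{R}^2$, so $\hat G$ is planar, and the embedding is accumulation free because $\{p_v : v \in V\}$ is finite, hence discrete and closed in $\mathbb{T}^2$, and $\pi$ is a local homeomorphism, so its preimage is discrete and closed in $\mathbb{R}^2$.

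Next I would check that the coordinate projection $\phi := \pi|_{\hat V}\colon \hat V \to V$ (composed with the identification of $p_v$ with $v$) is a covering map in the graph-theoretic sense of the paper, i.e. a bijection $N[\hat v] \to N[\phi(\hat v)]$ for every $\hat v$. Surjectivity of $\phi$ is clear. For the local bijection: pick a vertex $\hat v$ lying over $v$, and choose a small open disc $D \subseteq \mathbb{T}^2$ around $p_v$ that meets no other vertex and meets each arc $a_e$ only in an initial/terminal segment; then $\pi^{-1}(D)$ is a disjoint union of discs, one of which, $\hat D$, contains $\hat v$, and $\pi$ restricts to a homeomorphism $\hat D \to D$. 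Since edges incident with $v$ correspond bijectively to the arc-segments of $D$ at $p_v$, and these lift bijectively to arc-segments of $\hat D$ at $\hat v$, the neighbours of $\hat v$ in $\hat G$ map bijectively onto the neighbours of $v$ in $G$; that $\phi$ is also injective on $N[\hat v]$ (no two neighbours of $\hat v$ lie over the same neighbour of $v$) follows because distinct lifted arc-segments in $\hat D$ end at distinct vertices. Hence $\hat G$ is a planar cover of $G$.

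For polynomial growth, observe that $\phi$ being a graph covering map means it is locally distance-preserving, so any edge of $\hat G$ projects to an edge of $G$ and a walk in $\hat G$ projects to a walk of the same length; more useful here is that $\hat G$ acts like a $\mathbb{Z}^2$-periodic graph. Concretely, the deck group $\mathbb{Z}^2$ acts freely on $\hat G$ by graph automorphisms with finite quotient $G$, so $\hat G$ is a finite-index cover of the Cayley-like object $\mathbb{Z}^2$; a standard argument (e.g. a ball $B_{\hat v}(n)$ in $\hat G$ meets only those translates of a fundamental domain whose $\mathbb{Z}^2$-coordinates lie within graph-distance $n$, and there are $O(n^2)$ of those, each contributing $|V|$ vertices) gives $|B_{\hat v}(n)| = O(n^2)$. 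Finally $\hat G$ is infinite since $\mathbb{Z}^2$ is infinite and acts freely.

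The main obstacle, and the only place where a little care is genuinely needed, is verifying that the topological pullback really is a graph covering map in the combinatorial sense used in the paper — in particular the \emph{bijection} (not just surjection) of $N[\hat v]$ onto $N[v]$, which can fail if one is sloppy about, e.g., loops or parallel arcs at $p_v$; here it is saved by the hypothesis that $G$ is simple and the embedding axioms (distinct points, internally disjoint arcs), together with choosing the disc $D$ small enough. Everything else — planarity, accumulation-freeness, infiniteness, the $O(n^2)$ growth bound — then follows routinely from the structure of $\pi\colon\mathbb{R}^2\to\mathbb{T}^2$ as a $\mathbb{Z}^2$-Galois cover.
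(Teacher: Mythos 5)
Your proposal is correct and follows essentially the same route as the paper: pull the toroidal embedding back along $\pi\colon\mathbb{R}^2\to\mathbb{T}^2$, check that this yields a planar cover with an accumulation-free embedding, and bound the growth quadratically. Your growth estimate is phrased via the $\mathbb{Z}^2$ deck action on fundamental domains rather than the paper's Euclidean bounding-box argument, but both rest on the same observation that each edge displaces the lift by a bounded amount, so the arguments are interchangeable.
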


\begin{figure}
    \centering
    \begin{subfigure}[t]{0.36\textwidth}
    \centering
	\tikzset{->-/.style={decoration={
					markings,
					mark=at position .5 with {\arrow{>[scale=1.5]}}},postaction={decorate}}}
	\tikzset{->>-/.style={decoration={
					markings,
					mark=at position .5 with {\arrow{>[scale=1.5]>[scale=1.5]}}},postaction={decorate}}}
		
	\begin{tikzpicture}[vertex/.style={inner sep=1.5pt,circle,draw,fill}]
		\node (tl) at ($(0:-2)+(90:2)$){};
		\node (bl) at ($(0:-2)+(90:-2)$){};
		\node (tr) at ($(0:2)+(90:2)$){};
		\node (br) at ($(0:2)+(90:-2)$){};
		\path[draw,->>-] (bl.center)--(tl.center);
		\path[draw,->>-] (br.center)--(tr.center);
		\path[draw,->-] (bl.center)--(br.center);
		\path[draw,->-] (tl.center)--(tr.center);
		
	\path [clip] (tr.center)--(br.center)--(bl.center)--(tl.center)--cycle;
	
	{
	\foreach \i in {-1,0,1}
	\foreach \j in {-1,0,1}
	{
	\node[vertex] (c\i\j) at ($(0:4*\i)+(90:4*\j)$){};
	\foreach \a in {0,...,8}
	{
		\node[vertex] (v\a\i\j) at ($({75+\a*40}:1.5)+(c\i\j)$){};
	}
    }
	
	\foreach \i in {-1,0}
	\foreach \j in {-1,0}
	{
	\foreach \a in {0,...,8}
	{
		\pgfmathtruncatemacro{\b}{mod(\a+1,9)} 
		\draw (v\a\i\j)--(v\b\i\j);
	}
	\foreach \a in {0,3,6}
	{
		\draw (v\a\i\j)--(c\i\j);
	}
	
	\pgfmathtruncatemacro{\ii}{\i+1} 
	\pgfmathtruncatemacro{\jj}{\j+1} 
	
	\draw (v1\i\j)--(v5\i\jj);
	\draw (v2\ii\j)--(v7\i\j);
	\draw (v4\ii\jj)--(v8\i\j);
    }
    }
    \end{tikzpicture}
    \caption{}
    \label{subfig:petersen}
\end{subfigure}
\hspace{.5cm}
\begin{subfigure}[t]{0.54\textwidth}
\centering
\begin{tikzpicture}[scale=1.1,vertex/.style={inner sep=1pt,circle,draw,fill}]
	
	\path [clip] (-.45,3.45)--(5.45,3.45)--(5.45,-.45)--(-.45,-.45)--cycle;
 	\foreach \i in {1,...,7}
 	    \draw[dashed] (\i-.5,-1)--(\i-.5,5);
 	\foreach \i in {1,...,3}
 	    \draw[dashed] (-1,\i-.5)--(9,\i-.5);
	
	\foreach \i in {-1,...,8}
	\foreach \j in {-1,...,4}
	{
	\node[vertex] (c\i\j) at ($(0:1*\i)+(90:1*\j)$){};
	\foreach \a in {0,...,8}
	{
		\node[vertex] (v\a\i\j) at ($({75+\a*40}:.35)+(c\i\j)$){};
	}
    }
	
	\foreach \i in {-1,...,7}
	\foreach \j in {-1,...,3}
	{
	\foreach \a in {0,...,8}
	{
		\pgfmathtruncatemacro{\b}{mod(\a+1,9)} 
		\draw (v\a\i\j)--(v\b\i\j);
	}
	\foreach \a in {0,3,6}
	{
		\draw (v\a\i\j)--(c\i\j);
	}
	
	\pgfmathtruncatemacro{\ii}{\i+1} 
	\pgfmathtruncatemacro{\jj}{\j+1} 
	
	\draw (v1\i\j)--(v5\i\jj);
	\draw (v2\ii\j)--(v7\i\j);
	\draw (v4\ii\jj)--(v8\i\j);
    }
    \end{tikzpicture}
    \caption{}
    \label{subfig:cover}
\end{subfigure}
\caption{(\subref{subfig:petersen}) An embedding of the Petersen graph in the torus and (\subref{subfig:cover}) part of the corresponding planar cover constructed in Lemma~\ref{lem:planarcover}. Note that the drawing in each of the dashed squares on the right is exactly the same as the drawing on the left.}
\label{fig:planarcover}
\end{figure}
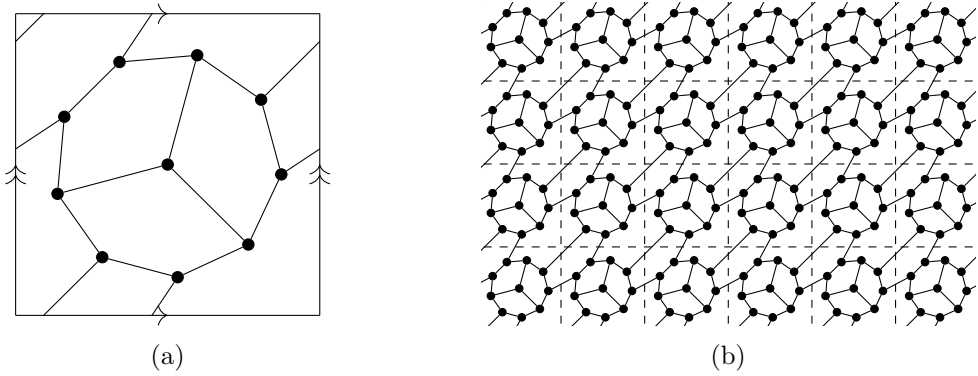

\begin{proof}[Proof sketch]
Let $G$ be a toroidal graph and let $(p_v)_{v \in V},(a_e)_{e \in E}$ be an embedding of $G$ in $\mathbb T^2 = \mathbb R^2/\mathbb Z^2$. Let $\pi \colon \mathbb R^2 \to \mathbb T^2$ be the usual projection map, that is, \ $\pi(x) = x + \mathbb Z^2$. For $v\in V$ define the set $P_v = \pi^{-1}(p_v)$, and for $e \in E$ let $A_e$ be the set of connected components of $ \pi^{-1}(a_e)$. In other words, $P_v$ is the set of all points in $\mathbb R^2$ that project to the embedding $p_v$ of $v$ in $\mathbb T^2$, and $A_e$ is a collection of arcs in $\mathbb R^2$ each of which projects to the embedding $a_e$ of $e$ in $\mathbb T^2$, see Figure~\ref{fig:planarcover} for an example. It is readily verified that the set of points $P = \bigcup_{v \in V} P_v$ together with the set of arcs $A = \bigcup_{e \in E} A_e$ defines an accumulation free embedding of a graph $\hat G = (\hat V , \hat E)$ in the plane and that the projection $\pi$ gives rise to a covering map by mapping $\hat v$ to $v$ if $\pi(p_{\hat v})=p_v$. 

To show polynomial growth, note that in the embedding of $\hat G$ defined above, exactly $|V|$ vertices embed into any translate of $[0,1)^2$. Since any two arcs in $A_e$ can be mapped into each other by a translation, there is an absolute upper bound $R$ on the Euclidian distance of the embeddings of two neighbours in $\hat G$. Consequently, the embeddings of all vertices in $B_v(r)$ are contained in some translate of $[-r R,r R + \epsilon)^2$ and thus $B_v(r)$ contains at most $(2 r R + \epsilon)^2 \cdot |V|$ vertices.
\end{proof}

Given an equivalence relation $T$ on $V$ we can define the following variant of \CnR, which we call $T$-\CnR. The rules are the same as in the original game, except \C is able to `teleport cops to an equivalent position' before moving them. More formally, she can pick $\tilde c_{n-1}^i \, T \, c_{n-1}^i$ and choose $c_n^i \in N[\tilde c_{n-1}^i]$. The \emph{$T$-cop number} $c^T(G)$ is the least $k$ such that \C has a winning strategy using $k$ cops in $T$-\CnR. Note that we do not allow \R\ to teleport---this is essential for Lemma~\ref{lem:guardpath} which otherwise would have to be replaced by an even more technical statement.

For the remainder of this section, we will use the following assertion. Note that $G$ is required to be finite while $\hat G$ can be arbitrary.
\begin{assert}
\label{ass:general}
Let $\hat G = (\hat V, \hat E)$ be a cover of a finite graph $G = (V,E)$ with covering map $\phi$, and let $T$ be the equivalence relation defined by $v \, T \, w$ if and only if $\phi(v) = \phi(w)$.
\end{assert}

The following lemma connecting \CnR on $G$ to $T$-\CnR on $\hat G$ is very similar to \cite[Lemma 1]{clarceetal-nonorientable}.

\begin{lemma}
\label{lem:copnumber-cover}
Under Assertion \ref{ass:general} we have  $c(G) \leq c^T(\hat G)$. 
\end{lemma}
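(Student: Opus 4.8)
The plan is to take a winning strategy for \C using $k := c^T(\hat G)$ cops in $T$-\CnR on $\hat G$ (if $c^T(\hat G) = \infty$ there is nothing to prove, so assume it is finite and such a strategy exists) and to simulate it in order to produce a winning strategy for \C using $k$ cops in ordinary \CnR on $G$, which then gives $c(G) \le k$. Two observations drive the argument. First, since $\phi$ restricts to a bijection $N[\hat v] \to N[\phi(\hat v)]$ for every $\hat v \in \hat V$, any walk of \R\ in $G$ lifts: once a starting preimage is fixed, each subsequent robber vertex $r_n \in N[r_{n-1}]$ has a unique lift $\hat r_n \in N[\hat r_{n-1}]$ with $\phi(\hat r_n) = r_n$. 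Second, replacing a cop position $\hat c$ by a $T$-equivalent position $\tilde c$ leaves $\phi(\hat c) = \phi(\tilde c)$ unchanged, so the teleportation power that \C\ enjoys in the $\hat G$-game is invisible once we project down to $G$; this is precisely why the stronger game on the cover is the right object.

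Concretely, \C\ plays the $G$-game while maintaining a ``shadow'' play of $T$-\CnR on $\hat G$. Fix for each $v \in V$ a preimage $s_v \in \phi^{-1}(v)$. \C\ consults the shadow strategy for its opening placement $(\hat c_0^1,\dots,\hat c_0^k) \in \hat V^k$ and places cop $i$ at $\phi(\hat c_0^i)$ in $G$. When \R\ picks $r_0 \in V$, \C\ sets $\hat r_0 := s_{r_0}$ and feeds this to the shadow strategy as \R's opening move. On each subsequent turn $n$, \C\ reads the shadow response $\tilde c_{n-1}^i \, T \, \hat c_{n-1}^i$ and $\hat c_n^i \in N[\tilde c_{n-1}^i]$, and moves cop $i$ in $G$ from its current vertex $\phi(\hat c_{n-1}^i) = \phi(\tilde c_{n-1}^i)$ to $\phi(\hat c_n^i)$; this is a legal move in $G$ because $\phi$ maps $N[\tilde c_{n-1}^i]$ into $N[\phi(\tilde c_{n-1}^i)]$. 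After \R\ moves to $r_n \in N[r_{n-1}]$, \C\ updates the shadow robber position to the unique $\hat r_n \in N[\hat r_{n-1}]$ with $\phi(\hat r_n) = r_n$, which exists since $\phi$ restricted to $N[\hat r_{n-1}]$ is a bijection onto $N[r_{n-1}]$. Since the shadow state is updated deterministically from \C's own choices and \R's observed moves, this is a genuine strategy for the $G$-game.

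It remains to check that the shadow play is a legitimate run of $T$-\CnR on $\hat G$ in which \C\ follows her winning strategy: the cops' moves are exactly those prescribed by that strategy, and the robber's transitions $\hat r_{n-1} \to \hat r_n$ are legal non-teleporting moves by construction (it matters here that in the shadow game \R\ never teleports, in accordance with the rules of $T$-\CnR). Hence this run ends after finitely many turns with $\hat c_{n+1}^i = \hat r_n$ (or $\hat c_n^i = \hat r_n$) for some $i$, and applying $\phi$ yields $c_{n+1}^i = r_n$ (resp.\ $c_n^i = r_n$) in the simulated $G$-game, so \R\ is caught. I do not anticipate a serious obstacle: the argument is a bookkeeping-style simulation, and the only points requiring care are the order of moves within a turn (\C\ moves before \R, so the shadow cop move on turn $n$ is computed from the shadow robber position after turn $n-1$) and the identity $\phi(\hat c) = \phi(\tilde c)$ for $T$-equivalent vertices, which is what converts a teleport-and-move on $\hat G$ into a single legal cop move on $G$. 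It is also worth noting in passing that finiteness of $G$ is not actually used in this direction.
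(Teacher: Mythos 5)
Your proposal is correct and follows essentially the same route as the paper: lift \R's walk to $\hat G$ via the covering map (uniquely, once an initial preimage is chosen), run the winning $T$-\CnR strategy against this lift, and project the cops' moves back down, using that $T$-equivalent vertices have the same $\phi$-image so a teleport-and-move upstairs projects to a single legal cop move downstairs. The only difference is that you spell out the bookkeeping (legality of the projected moves, the order of moves within a turn) that the paper compresses into ``this is a valid strategy because $\phi$ is a covering map.''
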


\begin{proof}
Assume that \C has a winning strategy for $T$-\CnR with $k$ cops on $\hat G$. We define a strategy of \C on $G$ by projecting such a winning strategy onto $G$. 

More precisely, given the initial position $r_0$ chosen by \R on $G$, we pick $\hat r_0$ arbitrarily with $\phi(\hat r_0) = r_0$. For $n \geq 1$, let $r_n$ be the position of \R on $G$ at time $n$, and inductively pick $\hat r_n \in N[\hat r_{n-1}]$ such that $\phi(\hat r_n) = r_n$. Note that $\hat r_n$ is unique since $\phi$ is a covering map. The strategy for \C on $G$ is given by $(\phi(\hat c_n^1),\dots,\phi(\hat c_n^k))$ where $(\hat c_n^1,\dots,\hat c_n^k)$ is the position of \C on $\hat G$ with respect to the winning strategy played against $\hat r_n$. Note that this is a valid strategy because $\phi$ is a covering map.

Since the strategy of \C on $\hat G$ is winning, there is some $n$ and $i$ such that $\hat c_n^i = \hat r_{n-1}$. This clearly implies $c_n^i = \phi (\hat c_n^i) = \phi(\hat r_{n-1}) = r_n$, so \C wins the game on $G$ in the same move or earlier.
\end{proof}

A weaker version of the next lemma can be found in \cite{aignerfromme-planar}. The advantage of our version is that we can use the additional power of \C in $T$-\CnR to obtain a bound the distance between $u$ and $r_j$ until the path $P$ is guarded. This will be essential in the proof of our main result.

\begin{lemma}
\label{lem:guardpath}
Assume Assertion \ref{ass:general}, let $u,v \in \hat V$, and let let $P$ be a shortest $u$-$v$-path. Let $r_n$, $c_n^i$ be positions in $T$-\CnR on $\hat G$ with $k$ cops at time $n$, and let $i_0 \in \{1,\dots,k\}$. Then there is a strategy for \C such that for some $m > n$ the following hold:
\begin{enumerate}
    \item $d(u,r_j) \leq d(u,r_n) + |V|$ for $n \leq j \leq m$,
    \item $P$ is $i_0$-guarded at all times $j \geq m$.
\end{enumerate}
Furthermore this strategy does not depend on how $c_j^i$ evolve for $i \neq i_0$ (the value of $m$, however, depends on \R's strategy which in turn may depend on all $c_j^i$).
\end{lemma}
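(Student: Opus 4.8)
The plan is to have cop $i_0$ play a \emph{shadow strategy}: it chases a retraction of \R\ onto $P$, exactly as in Aigner and Fromme's argument \cite{aignerfromme-planar}, but it exploits the teleportation power of $T$-\CnR\ to jump onto the relevant part of $P$ quickly — in fact to position itself \emph{ahead of} \R's shadow before \R\ has time to run far from $u$.

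Write $P = p_0 p_1 \cdots p_\ell$ with $p_0=u$, $p_\ell=v$. Since $P$ is a shortest path it is isometric, so $d(p_a,p_b)=|a-b|$, and the map $\sigma\colon \hat V \to \{0,\dots,\ell\}$, $\sigma(x)=\min(d(u,x),\ell)$, is a retraction of $\hat G$ onto $P$: it is the identity on $V(P)$ and $|\sigma(x)-\sigma(y)|\le 1$ whenever $x=y$ or $xy\in\hat E$. In particular $|\sigma(r_{j})-\sigma(r_{j-1})|\le 1$ for every $j$, which is exactly where it matters that \R\ may not teleport. As in the classical argument, if at some time $j$ cop $i_0$ sits at $p_{\sigma(r_{j-1})}$, then (since entering $P$ changes $\sigma$ by at most $1$) it catches \R\ the moment \R\ enters $P$, and the configuration ``$c^{i_0}_j=p_{\sigma(r_{j-1})}$'' is self-maintaining; thus it suffices to reach such a configuration, with the extra distance control.

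The new ingredient is a \emph{fast-navigation} fact under Assertion~\ref{ass:general} (with $G$ connected, so $\operatorname{diam}(G)\le|V|-1$): from \emph{any} position, cop $i_0$ can move to any prescribed vertex $w\in\hat V$ in at most $|V|-1$ turns. Indeed, let $F$ be the $\phi$-fibre of the cop's current vertex; lifting a shortest $\phi(w)$--$\phi(c^{i_0})$ path of $G$ to a walk of $\hat G$ starting at $w$ produces $w'\in F$ with $d(w,w')\le\operatorname{diam}(G)$. Cop $i_0$ teleports from $c^{i_0}$ to $w'$ (legal, as $w'\mathbin T c^{i_0}$) and then walks along this lifted path to $w$. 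This depends only on the cop's own position and on $w$, hence not on the other cops.

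Now set $d_0=d(u,r_n)$ and $\tau=\min(d_0+|V|-1,\ \ell)$. \textbf{Phase 1:} using fast navigation, cop $i_0$ moves to $p_\tau$, arriving at some time $m_1\le n+|V|-1$; during Phase 1 one has $d(u,r_j)\le d_0+|V|-1$ and, using $\sigma\le\ell$ throughout, $\sigma(r_j)\le\tau$, so in particular $\sigma(r_{m_1})\le\tau=c^{i_0}_{m_1}$: the cop ends Phase 1 on $P$ at an index that is at least \R's shadow. \textbf{Phase 2:} for $j>m_1$ cop $i_0$ stays on $P$; from $p_c$, seeing \R\ at $r_{j-1}$, it moves to $p_{c'}$ with $c'=\max(\sigma(r_{j-1}),\,c-1)$. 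An easy induction shows this keeps the cop on $P$ at an index $c_j$ with $\sigma(r_{j-1})\le c_j\le\tau$, and that the nonnegative integer $c_j$ decreases each turn until the gap $c_j-\sigma(r_{j-1})$ vanishes; let $m\le m_1+\tau$ be the first time this happens. For $j\ge m$ the cop maintains $c^{i_0}_j=p_{\sigma(r_{j-1})}$, which gives conclusion~(2). For~(1): we have $\sigma(r_j)\le\tau$ for all $n\le j\le m-1$. If $\tau=d_0+|V|-1<\ell$ this forces $d(u,r_j)=\sigma(r_j)\le d_0+|V|-1$ for such $j$, hence $d(u,r_m)\le d(u,r_{m-1})+1\le d_0+|V|$. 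If instead $\tau=\ell\le d_0+|V|-1$, let $j^{*}$ be the first time (if any) with $d(u,r_{j^{*}})\ge\ell$; then $\sigma(r_{j^{*}})=\ell$ forces $c^{i_0}_{j^{*}+1}=p_\ell$, so $m\le j^{*}+1$, and since distances change by at most $1$ one gets $d(u,r_j)\le\ell+1\le d_0+|V|$ for all $n\le j\le m$. Finally $m$ depends only on how \R\ plays, while cop $i_0$'s prescription does not.

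I expect the main obstacle to be precisely this last bookkeeping: the target index $\tau$ must be tuned so the cop lands just far enough ahead of the shadow, and one must treat the case in which \R\ flees past $v$ so that its shadow saturates at $\ell$ — the key observation there being that saturation of the shadow forces cop $i_0$ to finish guarding within one more move, so \R\ never gains the distance needed to violate~(1).
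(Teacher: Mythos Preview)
Your proof is correct and follows essentially the same approach as the paper: use the teleportation power to place cop $i_0$ on $P$ at the vertex of index $\tau=\min(\ell,\,d(u,r_n)+|V|-1)$ (the paper uses $|V|$ rather than $|V|-1$), then walk the cop toward $u$ until its index matches the robber's shadow $\sigma(r_{j-1})$, after which the standard Aigner--Fromme maintenance gives guarding. The only cosmetic differences are your explicit formulation of the retraction $\sigma$ and of the Phase-2 update rule $c'=\max(\sigma(r_{j-1}),c-1)$; one small imprecision is that in the case $\tau=\ell$ your ``first time $j^{*}$'' should be taken with $j^{*}\ge m_1$ so that the Phase-2 rule actually applies, but the Phase-1 bound $d(u,r_j)\le d_0+|V|-1$ already covers earlier times, so this is harmless.
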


\begin{proof}
Without loss of generality take $i_0 = 1$ and $n = 0$. We give a strategy with the desired properties. 

Let $D$ be the length of $P$ and let $x$ be the unique vertex on $P$ satisfying $d(u,x) = \min(D, d(u,r_0) + |V|)$---uniqueness follows from the fact that $P$ is a shortest path. Since $\phi$ is a covering map, it can be used to lift any path from $\phi(x)$ to $\phi(c_0^1)$ in $G$ to a path from $x$ to some $\tilde c_0^1 \, T \, c_0^1$ in $\hat G$. The distance between $\phi(x)$ and $\phi(c_0^1)$ in $G$ is at most $|V|$, thus there is some $\tilde c_0^1 \, T \, c_0^1$ in $\hat V$ such that $d(x,\tilde c_0^1) \leq |V|$. 

The strategy is as follows, see Figure~\ref{fig:guardpath}.
By teleporting to $\tilde c_0^1$ and then choosing $c_{j+1}^1$ as close as possible to $x$, \C ensures that $c_{j}^1 = x$ for some $j \leq |V|$, in particular, she can make sure that $c_{|V|}^1 = x$. 

For $j > |V|$ we proceed as follows. Let $r'_j$ be the unique vertex on $P$ at distance $\min (d(u,r_j) , D)$ from $u$. If $c_j^1 = r_j'$, then \C chooses $c_{j+1}^1 = c_j^1$, otherwise $c_{j+1}^1$ is the neighbour of $c_j^1$ on $P$ which lies closer to $r_j'$ than $c_j^1$. Independence of this strategy from $c_j^i$ for $i \neq 1$ is obvious.

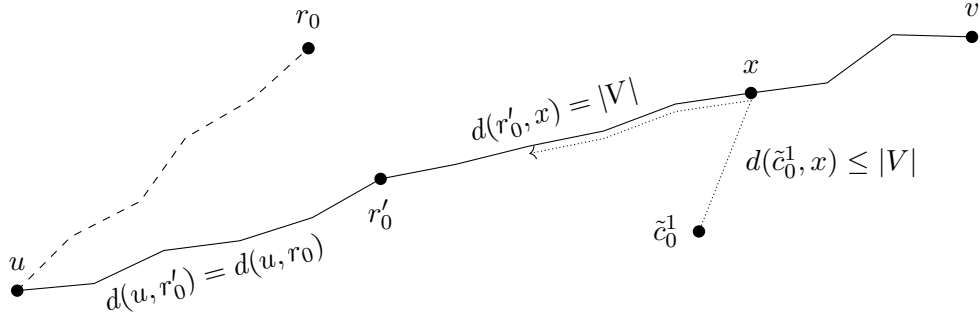
\begin{figure}
    \centering
    \begin{tikzpicture}[vertex/.style={inner sep=1.5pt,circle,draw,fill}]
	\pgfdeclarelayer{lines}
	\pgfdeclarelayer{dots}
	\pgfdeclarelayer{labels}
	\pgfsetlayers{lines,dots,labels}
	\pgfmathsetseed{1234567}
	
	\begin{pgfonlayer}{dots}
	    \node[vertex] (u) at (0,0){};
	    \node[coordinate] (v0) at (u){};
	    \node[vertex] (v) at (15:13){};
	    \node[coordinate] (v13) at (v){};
	    \foreach \i in {1,...,12}
	         \node[coordinate] (v\i) at ($ (15:\i) + rand*(105:.3)$){};
	    \foreach \i in {1,...,4}
	         \node[coordinate] (w\i) at ($ (40:\i) + rand*(130:.3)$){};
	    \node[vertex] (x) at (v10){};
	    \node[vertex] (r0dash) at (v5){};
	    \node[vertex] (r0) at (40:5){};
	    \node[vertex] (c0) at (5:9){};
	\end{pgfonlayer}
	
	\begin{pgfonlayer}{lines}
	    \draw {(v0) \foreach \i in {1,...,13} {--(v\i)}};
	    \draw[dashed] {(v0) \foreach \i in {1,...,4} {--(w\i)}--(r0)};
	    
	    \draw[densely dotted,->]{(c0) -- ($ (x) + (0,-.1) $)\foreach \i in {9,8,7} {--($ (v\i) + (0,-.1) $)}};
	\end{pgfonlayer}

	\begin{pgfonlayer}{labels}
	    \node [label={$u$}] at (u) {};
	    \node [label={$v$}] at (v) {};
	    \node [label={$x$}] at (x) {};
	    \node [label={$r_0$}] at (r0) {};
	    \node [label={below:$r_0'$}] at (r0dash) {};
	    \node [label={left:$\tilde c_0^1$}] at (c0) {};
	    \node [label={[rotate=15]below:$d(u,r_0') = d(u,r_0) $}] at (15:2.6) {};
	    \node [label={[rotate=15]above:$d(r_0',x) = |V|$}] at (15:7.4) {};
	    \node [label={right:$d(\tilde c_0^1,x) \leq  |V|$}] at ($1/2*(c0) + 1/2*(x) $) {};
	\end{pgfonlayer}
	
    \end{tikzpicture}
    \caption{Situation in the proof of Lemma~\ref{lem:guardpath}. The solid line is $P$, the dotted line indicates, how $c_j^1$ develops after teleportation to $\tilde c_0^1$.}
    \label{fig:guardpath}
\end{figure}

Note that $d(u,r_j') \leq \min (D , d(u,r_0) + |V|) = d(u,x)$ for $j \leq |V|$, and thus $d(u,r_{|V|}') \leq d(u,c_{|V|}^1)$. Since $r_{j+1}'$ is contained in the closed neighbourhood of $r_j'$ in $P$, there must be some $m \geq |V|$ such that for $|V| \leq j <m$ we get that $c_{j+1}^1$ is the neighbour of $c_j^1$ which lies closer to $u$ and for $j\geq m$ we get $c_{j+1}^1 = r_j$. Note that if $d(u,r_{|V|}) \geq D$, then $r_{|V|'} = c_{|V|}^1 = v$ whence $m = |V|$ and thus $d(u,r_j) \leq d(u,r_0) + |V|$ for every $j \leq m$. Otherwise, clearly $d(u, r_j) = d(u, r_j') \leq d(u,c_j^1) \leq d(u,r_0) + |V|$ for $j \leq m$, thus proving the first claimed property. The second property follows from the fact that if $r_j \in P$ for $j \geq m$, then $r_j = r_j' = c_{j+1}^1$.
\end{proof}

The next lemma is already implicit in \cite{aignerfromme-planar}. We provide a proof for the sake of completeness---essentially this is the same as the proof of  \cite[Theorem~6]{aignerfromme-planar}, starting in situation (b), described on \cite[page~9]{aignerfromme-planar} which roughly corresponds to condition \ref{itm:twopaths} below. 

The basic idea of the strategy is that \C always has the robber surrounded by two cops. More formally, \C will ensure that the following condition is satisfied with respect to some fixed embedding.
\begin{enumerate}[label=(\textasteriskcentered)]
    \item \label{itm:twopaths} There are paths $P$ and $Q$ (one of which may be empty), a finite component $R$ of $G\setminus(P\cup Q)$, and $m \in \mathbb N$ such that
    \begin{enumerate}
        \item $P$ and $Q$ embed on the boundary of the outer face of the graph induced by $P\cup Q \cup R$,
        \item $r_m \in R$, and
        \item \C has a strategy such that $P$ is $i$-guarded and $Q$ is $i'$-guarded with $i \neq i'$ for every $j \geq m$.
    \end{enumerate}
\end{enumerate}
Note that we allow the paths to be empty, that is, we consider the empty graph as a path. The reason for this is that it reduces the casework involved in the proof. It is also worth noting that if $G$ is a finite, connected planar graph, then \ref{itm:twopaths} can easily be satisfied for a path $P$ consisting of one vertex and an empty path $Q$. In particular, the lemma below implies that any such graph satisfies $c(G) \leq 3$.

\begin{lemma}
\label{lem:planarwin}
Let $G$ be a (potentially infinite) connected planar graph with some fixed planar embedding. If \ref{itm:twopaths} holds in \CnR with $3$ cops, then \C has a winning strategy.
\end{lemma}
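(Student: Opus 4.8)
The plan is to show that condition \ref{itm:twopaths} is invariant under the game dynamics, in the following sense: whenever \ref{itm:twopaths} holds at some time $m$ with finite component $R$, either the robber is caught within finitely many moves, or \C can force a new configuration satisfying \ref{itm:twopaths} whose finite component $R'$ is strictly contained in $R$ (as a vertex set). Since $R$ is finite and the robber's region can only shrink, this descent must terminate, and the only way it can terminate is with the robber being caught. This is exactly the strategy structure of Aigner--Fromme, and the third cop is the one we will move around freely while the two guards on $P$ and $Q$ are maintained by Lemma~\ref{lem:guardpath}.

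The core step is the ``shrinking'' move. Suppose \ref{itm:twopaths} holds: $P$ is $1$-guarded, $Q$ is $2$-guarded, $r_m \in R$, and the outer boundary of $P\cup Q\cup R$ is (a subset of) $P\cup Q$. The third cop, cop $3$, is free. We pick a suitable shortest path $W$ inside $\overline R = R\cup P\cup Q$ that ``cuts'' $R$ into two pieces — concretely, $W$ should run from a vertex of $P$ to a vertex of $Q$ through $R$ (or, in the degenerate case where one of $P,Q$ is empty, $W$ is chosen so that $P\cup W$ or $Q\cup W$ bounds a region; this is why we allow empty paths). Because $G$ is planar with the fixed embedding, such a $W$ together with appropriate sub-arcs of $P$ and $Q$ separates $R$ into two finite regions $R_1$ and $R_2$, and the robber, being confined to $R$ (he cannot enter the guarded $P$ or $Q$ without being caught), must lie in $R_1\cup R_2\cup W$. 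Now apply Lemma~\ref{lem:guardpath} with cop $3$ and the path $W$: after finitely many moves $W$ becomes $3$-guarded, and crucially the distance bound in part (1) of that lemma keeps the robber from escaping ``around the end'' of $W$ during the transient phase — this is precisely the place where the extra power of $T$-\CnR and the $|V|$ bound matter, since otherwise the robber could slip past the endpoint of $W$ before it is fully guarded. Once $W$ is guarded, the robber is trapped in one of the two smaller regions, say $R_1$, which is bounded by sub-paths of $P$, $Q$, and $W$. We then re-partition the (at most) three guarding cops among these (at most) two bounding paths: one new path is $P' \subseteq P \cup W$, the other is $Q' \subseteq Q \cup W$, each guarded by the cop that was guarding the corresponding original path, and we have freed up at least one cop (the one guarding the part of the old boundary not touching $R_1$) to become the new ``cop $3$''. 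This yields \ref{itm:twopaths} again with $R' = R_1 \subsetneq R$.

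Two points need care in writing this up. First, the re-partitioning: I must check that after cutting with $W$, the boundary of the smaller region really is covered by at most two paths, each a sub-path of one of the three guarded paths $P$, $Q$, $W$; this is a planarity/Jordan-curve argument and is the reason the two-paths formulation (rather than, say, a single cycle) is the right invariant, and the reason empty paths are permitted. Second, the transition between phases is not instantaneous: while cop $3$ is establishing the guard on $W$ via Lemma~\ref{lem:guardpath}, cops $1$ and $2$ must continue guarding $P$ and $Q$, which is fine since Lemma~\ref{lem:guardpath} says the guarding strategy for cop $3$ does not depend on the other cops' trajectories. The main obstacle I anticipate is the bookkeeping in the degenerate cases — when $R$ meets only one of $P,Q$, or when $W$ shares endpoints or sub-paths with $P$ or $Q$ — where one has to verify that the new region is still finite, its boundary is still two paths, and the guard assignments are consistent; but these are exactly the cases the ``empty path allowed'' convention is designed to absorb, and the argument of \cite[Theorem~6]{aignerfromme-planar} handles them, so I would follow that treatment closely rather than reinvent it.
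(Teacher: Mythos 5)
Your overall plan (maintain \ref{itm:twopaths} and shrink the robber's region by guarding a third shortest path with the free cop) is the same as the paper's, but the crux is left unresolved and, as stated, your shrinking step does not go through. First, the re-partition claim is wrong in its current form: you say the new boundary paths $P'\subseteq P\cup W$ and $Q'\subseteq Q\cup W$ are ``each guarded by the cop that was guarding the corresponding original path,'' but no single cop guards a path that mixes vertices of $P$ (or $Q$) with vertices of $W$; more importantly, for an arbitrary $P$--$Q$ cut $W$ the robber's new component can be adjacent to pieces of \emph{all three} of $P$, $Q$, $W$, so its boundary needs three guarded paths and no cop is freed --- the induction stalls. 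The missing idea is the specific choice of the cut: orient $P$ and $Q$ consistently along the outer face, take their \emph{first} vertices $p$ and $q$, and let $S$ be a shortest $p$--$q$ path in the graph $H$ induced by $\{p,q\}\cup R$ (with the edge $pq$ deleted). Then a Jordan-curve argument shows the robber's component of $R\setminus S$ has no neighbours in $P\setminus\{p\}$ or none in $Q\setminus\{q\}$, so two paths ($S$ together with $P\setminus\{p\}$ or $Q\setminus\{q\}$) suffice and one cop is genuinely released. Relatedly, ``a shortest path inside $R\cup P\cup Q$ running through $R$'' is not a usable object: a shortest path in that graph need not pass through $R$, and a path forced through $R$ need not be shortest there, whereas Lemma~\ref{lem:guardpath} needs $W$ to be isometric in a graph to which the robber is confined --- hence the paper's graph $H$.

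Two further points. Your termination argument (``$R$ strictly shrinks'') is not quite right: in the degenerate step where one path is empty (or has endpoints not incident to $R$) the region need not shrink strictly, which is why the paper tracks the pair $|R|$ and $|P|+|Q|+|R|$ and checks that each iteration decreases one of them unless it turns an empty path into a non-empty one. Finally, your appeal to the distance bound of Lemma~\ref{lem:guardpath}(1) and to the ``extra power of $T$-\CnR'' is misplaced here: in this lemma the game is ordinary \CnR ($T$ is equality) and the robber cannot ``escape around the end of $W$'' in any case, since he is confined to the finite region $R$ by the guards on $P$ and $Q$; whichever component of $R\setminus W$ he occupies once $W$ is guarded is fine. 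The teleportation and the $|V|$ bound are needed only in the proof of the main theorem on the infinite cover, not in this planar endgame.
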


\begin{proof}
We proceed inductively. After each iteration, unless \C has won the game, we end up in  situation \ref{itm:twopaths}. The values $|R|$ and $|P|+|Q|+|R|$ never increase in an iteration. Moreover at least one of the two values decreases unless one of the paths is empty before the iteration (in which case both paths are non-empty afterwards). Since $|P|$, $|Q|$, and $|R|$ are non-negative integers this implies that there is some finite upper bound on the number of iterations which means that \C must eventually win the game.

We now turn to the iterative definition of the strategy. If $Q$ has an endpoint $q$ that either lies on $P$ or is not incident to $R$, then replace $Q$ by $Q' = Q \setminus\{q\}$. Since $Q$ is $i'$-guarded at time $j \geq m$, so is $Q'$. In particular, $P$, $Q'$, and $R$ satisfy the conditions of \ref{itm:twopaths}. Note that we left $R$ unchanged and decreased the value of $|P|+|Q|+|R|$.

Since $P$ and $Q$ both lie on the boundary of the outer face, they can only intersect if an endpoint of one of them lies on the other. Thus by iterating the above argument (and possibly exchanging the roles of $P$ and $Q$) we can assume that either $Q$ is empty, or $P$ and $Q$ are both non-empty and disjoint, and all of their endpoints are incident to $R$.

First assume that $Q$ is the empty path, and without loss of generality assume that $P$ is $1$-guarded. If $P$ contains all vertices on the boundary of the outer face of the subgraph induced by $P\cup R$, then let $q$ be an endpoint of $P$, and let $P' = P\setminus\{q\}$. Otherwise, let $q$ be a vertex that is incident to the outer face but does not lie on $P$, and let $P' = P$. Let $Q'$ be the path consisting only of the vertex $q$. Note that in both cases $P'$ and $Q'$ are non-empty (as claimed) because the boundary of the outer face contains at least $3$ vertices.

Since $P$ is $1$-guarded at time $j \geq m$, so is $P'$. Moreover, \C can ensure that for some $m' \geq m$ and all $j \geq m'$ we have that $c_j^2 = q$. If $r_j \in P$ for some $m \leq j \leq m'$ or $r_{m'} = q$, then \C has won the game. Otherwise, $r_{m'}$ is contained in some component $R'$ of $R \setminus\{q\}$ and $P'$, $Q'$, and $R'$ satisfy \ref{itm:twopaths}. Note that $|R'| \leq |R|$ since $R' \subseteq R$ and $|P'|+|Q'|+|R'| = |P| + |Q| + |R|$ since $q$ was contained in exactly one of $P$ and $R$.

Finally, assume that $P$ and $Q$ are non-empty and disjoint, and their endpoints have neighbours in $R$. Orient $P$ and $Q$ in the same direction along the boundary of the outer face and let $p$ and $q$ be their first vertices, respectively. Let $H'$ be the subgraph of $G$ induced by $p$, $q$, and all vertices in $R$. If $pq$ is an edge, then let $H = H'\setminus\{pq\}$, otherwise let $H = H'$. Let $S$ be a shortest $p$-$q$-path in $H$---note that such a path exists because $R$ is connected, and both $p$ and $q$ are incident to $R$. Figure \ref{fig:planarstrategy} illustrates the resulting situation.

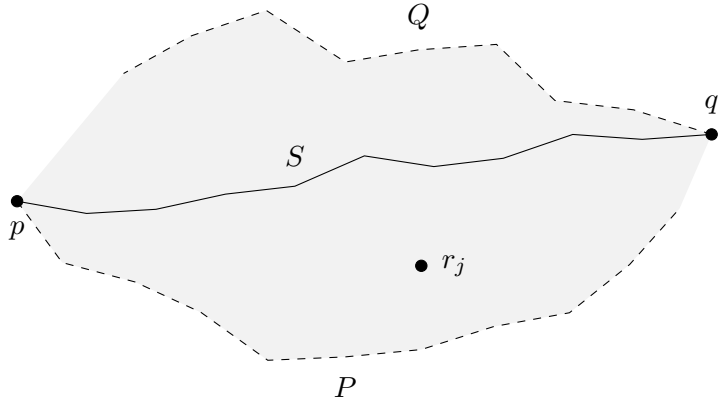
\begin{figure}
    \centering
    \begin{tikzpicture}[vertex/.style={inner sep=1.5pt,circle,draw,fill}]
	\pgfdeclarelayer{lines}
	\pgfdeclarelayer{dots}
	\pgfdeclarelayer{labels}
	\pgfdeclarelayer{bg}
	\pgfsetlayers{bg,lines,dots,labels}
	\pgfmathsetseed{1234567}
	
	\begin{pgfonlayer}{dots}
	    
	    \foreach \i in {0,...,10}
	         \node[coordinate] (p\i) at ($ (230+8*\i:7) + rand*(230+8*\i:.5)+(0,5)$){};
	    \node[vertex] (p) at (p0){};
	    \foreach \i in {0,...,8}
	         \node[coordinate] (q\i) at ($ (50+8*\i:7) + rand*(50+8*\i:.5)+(0,-5)$){};
	    \node[vertex] (q) at (q0){};
	    \node[coordinate] (s0) at (q0){};
	    \node[coordinate] (s10) at (p0){};
	    \foreach \i in {1,...,9}
	         \node[coordinate] (s\i) at ($ \i/10 *(p0) + (q0) - \i/10*(q0) +rand*(0,.3)$){};
	   \node[vertex] (rn) at (1,-1){};
	\end{pgfonlayer}
	
	\begin{pgfonlayer}{lines}
	    \draw[dashed] {(p0) \foreach \i in {1,...,10} {--(p\i)}};
	    \draw[dashed] {(q0) \foreach \i in {1,...,8} {--(q\i)}};
	    \draw {(s0) \foreach \i in {1,...,10} {--(s\i)}};
	    
	\end{pgfonlayer}
	
	\begin{pgfonlayer}{bg}
	\path[fill= gray!10]{(p0) \foreach \i in {1,...,10} {--(p\i)}\foreach \i in {0,...,8} {--(q\i)}--cycle};
	\end{pgfonlayer}
	
	\begin{pgfonlayer}{labels}
	    \node [label={below:$p$}] at (p) {};
	    \node [label={above:$q$}] at (q) {};
	    \node [label={below:$P$}] at (p5) {};
	    \node [label={above:$Q$}] at (q4) {};
	    \node [label={above:$S$}] at (s6) {};
	    \node [label={right:$r_j$}] at (rn) {};
	\end{pgfonlayer}
	
    \end{tikzpicture}
    
    \caption{Situation in the proof of Lemma~\ref{lem:planarwin}: $R$ is embedded in the shaded region of $\mathbb R^2$, all neighbours of $R$ lie in $R$, $P$, or $Q$. Note that any path in $R$ from $r_j$ to $Q$ must contain a vertex of $P$ or $S$.}
    \label{fig:planarstrategy}
\end{figure}

Assume without loss of generality that $P$ is $1$-guarded and $Q$ is $2$-guarded. If $r_j \in P \cup Q$ for some $j \geq m$, then \C wins the game. Otherwise $r_j \in R \subseteq H$ for all $j \geq m$, and by Lemma~\ref{lem:guardpath} (where $T$ is the equality relation on the vertex set of $H$) there is a strategy for \C such that for some $m' \geq m$ the paths $P$, $Q$, and $S$ are $1$-, $2$-, and $3$-guarded at all times $j \geq m'$, respectively.

By the above discussion, we already know that $r_{m'} \in R$. If $r_{m'} \in S$, then \C wins the game since $S$ is $3$-guarded. If not, then let $R'$ be the component of $R \setminus S$ containing $r_{m'}$. Note that due to the choice of $S$, either $R'$ has no neighbours in $P \setminus \{p\}$, or it has no neighbours in $Q \setminus \{q\}$. Without loss of generality assume the latter. Observe that $P \setminus \{p\}$, $S$, and $R'$ satisfy the conditions of \ref{itm:twopaths}. Since $S$ contains at least one inner point in $R$, we have $|R'| < |R|$. Furthermore, $P\setminus \{p\}$, $S$, and $R'$ are disjoint subsets of $P\cup Q \cup R$, whence $|P\setminus \{p\}| + |S| + |R'| \leq |P\cup Q \cup R| \leq |P|+|Q|+|R|$, thus completing the proof.
\end{proof}

\begin{theorem}
\label{thm:main}
If $G=(V,E)$ is a finite toroidal graph, then $c(G) \leq 3$
\end{theorem}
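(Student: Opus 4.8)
The plan is to stitch together all the preceding lemmas. By Lemma~\ref{lem:planarcover} fix an infinite planar cover $\hat G$ of $G$ that has polynomial growth and an accumulation-free planar embedding, let $\phi$ denote the covering map, and let $T$ be the equivalence relation whose classes are the fibres of $\phi$, as in Assertion~\ref{ass:general}. By Lemma~\ref{lem:copnumber-cover} it then suffices to prove $c^T(\hat G)\le 3$, i.e.\ to give \C a winning strategy with three cops in $T$-\CnR on $\hat G$. Two features of $\hat G$ will drive the argument: first, the construction of $\hat G$ equips it with a free action of $\mathbb Z^2$ by deck transformations (the integer translations of $\mathbb R^2$), and every orbit of this action lies inside a single $T$-class, so in $T$-\CnR a cop may always teleport within such an orbit; second, because $\hat G$ covers the finite graph $G$, only $|V|$ vertices sit in any fundamental domain of the action, and each ``axis direction'' of $\mathbb Z^2$ is realised by a bi-infinite periodic geodesic of $\hat G$ (the axis of a deck transformation of minimal translation length $\le |V|$).

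The strategy is to use these two features to drive the game into a position of type~\ref{itm:twopaths} inside a \emph{finite} subgraph of $\hat G$, from which Lemma~\ref{lem:planarwin} gives the win. First I would have cop~$1$ guard a bi-infinite geodesic $L_0$ that is the axis of a deck transformation realised by a shortest non-contractible closed walk of $G$ (if $G$ has no non-contractible closed walk it is planar and $c(G)\le 3$ already by Aigner--Fromme). The point of teleportation is that cop~$1$ need only chase, on the finite cycle $\phi(L_0)\subseteq G$, the $\phi$-image of the robber's Aigner--Fromme shadow on $L_0$; once she has caught up there---after at most $|V|$ moves---she can teleport to the lift of her current position sitting on $L_0$ exactly at the shadow, and thereafter keep $L_0$ guarded. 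Choosing $L_0$ to run at a suitable bounded distance from the robber's starting vertex, she moreover confines the robber, after the setup, to one of the two half-planes $L_0$ cuts $\hat G$ into. Cop~$2$ then does the same for a parallel geodesic $L_1$ (a transverse translate of $L_0$) placed on the robber's side of $L_0$; here the distance bound of Lemma~\ref{lem:guardpath}---or rather its straightforward analogue for periodic bi-infinite geodesics---is essential, since it says the robber can move only boundedly far away from $L_0$ while $L_1$ is being erected, so $L_1$ can be placed near enough that, once guarded, it confines the robber to the strip between $L_0$ and $L_1$. This strip has bounded width, it is invariant under the deck translation along $L_0$, and its quotient by that translation is a finite planar graph; the remaining play then takes place, in effect, on that finite quotient. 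A bounded number of further applications of the same idea---each time having the currently free cop guard a geodesic, or a geodesic segment, that cuts the trapped region roughly in half, the distance bound guaranteeing the robber cannot undo the progress---reduces the region to one so small that condition~\ref{itm:twopaths} holds, and Lemma~\ref{lem:planarwin} finishes the game.

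The main obstacle, and the part needing the most care, is exactly this confinement: one must organise the sequence of guarded geodesics so that (i) the process terminates even though $\hat G$ is infinite, and (ii) at the end a cop is still free to carry out the sweep of Lemma~\ref{lem:planarwin}, rather than all three being tied up holding the boundary of the trapped region. Both points hinge on exploiting that $\hat G$ is a $\mathbb Z^2$-cover of the \emph{finite} graph $G$: this is what makes teleportation along an axis direction legal, what lets a bounded-width strip of $\hat G$ behave like a finite graph once it is pinned in that direction, and---through the $|V|$ appearing in Lemma~\ref{lem:guardpath}---what stops the robber from escaping while the next geodesic is set up. Checking that the Aigner--Fromme shadow argument really does go through for these bi-infinite periodic geodesics, and bookkeeping a progress measure through the passage to the finite case, are the remaining routine (if somewhat tedious) tasks.
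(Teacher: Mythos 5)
Your reduction is the same as the paper's: Lemma~\ref{lem:planarcover} plus Lemma~\ref{lem:copnumber-cover} to get down to showing $c^T(\hat G)\le 3$. But the engine of your strategy --- a single cop guarding a bi-infinite periodic geodesic $L_0$ with the help of teleportation --- does not work, and this is a genuine gap, not a routine verification. Teleportation never changes a cop's image in $G$, so ``chasing the $\phi$-image of the robber's shadow on the cycle $\phi(L_0)$'' is an ordinary equal-speed pursuit of an adversarially steered target on a cycle, which one pursuer cannot win (this is exactly why $c(C_n)=2$); the claim that she catches up ``after at most $|V|$ moves'' is false. Seen upstairs in $\hat G$: the shadow is a $1$-Lipschitz retraction onto an infinite line, so whenever the cop approaches from one side (or teleports to a lift on one side), the robber --- who moves with full information and can run parallel to $L_0$ far away from it --- steers his shadow away along the other, infinite, direction; the cop can get within roughly one period of the shadow but can never be guaranteed to coincide with it, and without that she cannot prevent the robber from crossing $L_0$. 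This is precisely why the paper never guards an infinite path: Lemma~\ref{lem:guardpath} is stated for \emph{finite} shortest paths, where the shadow is capped at the endpoints, so the cop can first occupy a fixed vertex at least as far along as the shadow can have travelled and then close in monotonically.

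The paper's actual strategy avoids your construction altogether: it fixes a ball $B_{r_0}(D)$ with $D/|V|>\log|B_{r_0}(D)|$ (possible by polynomial growth), takes a shortest-path tree rooted at $r_0$, and performs a binary search over the boundary vertices that meet infinite components, at each stage guarding only finite tree paths $P_{a_t},P_{b_t},P_y$ from $r_0$; a planarity/accumulation-free separation claim shows the robber stays trapped in one of the two halved sectors, the distance bound of Lemma~\ref{lem:guardpath} plus the choice of $D$ shows the logarithmically many halvings finish before the robber can leave the ball, and the endgame is Lemma~\ref{lem:planarwin}. Your sketch defers exactly the analogous points --- termination and keeping a cop free --- as ``routine bookkeeping'', but they are the substance of the proof, and your proposed shortcut of ``quotienting the strip by the translation along $L_0$'' does not supply them: the game on the quotient is not the game on $\hat G$ (the robber effectively gains wrap-around moves and the two cops pinned to $L_0$ and $L_1$ cannot simultaneously hold their lines and play in the quotient), so even granting the guarding of $L_0$ and $L_1$ the finish is not reduced to a finite planar instance of Lemma~\ref{lem:planarwin} with a free cop. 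As it stands the proposal would need either a correct replacement for ``one cop guards a periodic geodesic'' (in effect two cops per line, which exhausts the budget) or a return to the paper's finite-path halving scheme.
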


\begin{proof}
In Assertion \ref{ass:general}, let $\hat G$ be a cover of $G$ embedded in the plane as in Lemma \ref{lem:planarcover}. By Lemma \ref{lem:copnumber-cover} it is enough to show that $c^T(\hat G) \leq 3$.

Assume that \C and \R have picked initial positions $(c_0^i)_{i \leq 3}$ and $r_0$ respectively. Choose $D$ large enough that 
\[\frac{D}{|V|} > \log(|B_{r_0}(D)|),\] 
where $\log$ denotes the base $2$ logarithm and $B_{r_0}(D)$ is the ball in $\hat G$. This is possible because $\hat G$ has polynomial growth and $V$ is finite. 

Let $T$ be a shortest path tree of $B_{r_0} (D)$ in $\hat G$ rooted at $r_0$, that is, the unique path in $T$ connecting $r_0$ to $v$ is a shortest $r_0$-$v$-path in $\hat G$ for every $v \in B_{r_0}(D)$.
Let $(v_i)_{1 \leq i \leq l}$ be the vertices at distance $D$ from $r_0$ which are connected by an edge to an infinite component of $G \setminus B_{r_0}(D)$, enumerated in the cyclic order given by the embedding of $T$ in $\mathbb R^2$. For convenience we define $v_0 = v_l$. Note that trivially $l < |B_{r_0}(D)|$. For $0 \leq i \leq l$, denote by $P_i$ the path from $r_0$ to $v_i$ in $T$. 

For $a<b$, denote by $[a,b]=\{v_i \mid a<i<b\}$ and $[b,a]=\{v_i \mid i<a \text{ or }i>b\}$. Let $a < b$ and let $H$ be the graph obtained from $B_{r_0}(D)$ removing the union of $P_a$ and $P_b$. We claim that in there is no path connecting $[a,b]$ to $[b,a]$ in $H$. 

For the proof of this claim, we first note that the embedding of any finite cycle $C$ in $G$ is a simple closed curve $K$ whence $\mathbb R^2\setminus K$ has two connected components one of which is bounded. We say that $C$ \emph{surrounds} a vertex $v$, if $v$ embeds in this bounded connected component. If there is a cycle $C$ in $B_{r_0}(D)$ surrounding $v_a$, then the infinite component of $G \setminus B_{r_0}(D)$ connected to $v_a$ must completely embed in the bounded connected component. The closure of a bounded subset of $\mathbb R^2$ is compact, so there is an accumulation point of vertices, contradicting the fact that the embedding from Lemma~\ref{lem:planarcover} is accumulation free. Thus there cannot be a cycle in $B_{r_0}(D)$ surrounding $v_a$, and the same clearly holds for $v_b$.

Assume now for a contradiction that there is a path $Q$ in $H$ connecting $v_i \in [a,b]$ to $v_j \in [b,a]$. Let $P_{ij}$ be the path from $v_i$ to $v_j$ in $T$, and let $P_{a}'$ and $P_{b}'$ be the paths in $T$ connecting $v_a$ and $v_b$ to $P_{ij}$ respectively. Analogously define $P_{ab}$, $P_i'$, and $P_j'$. Since $v_i \in [a,b]$ and $v_j \in [b,a]$, we either have $a < i < b < j$, or $j < a  < i < b$. The cyclic order of $v_a$, $v_i$, $v_b$, and $v_j$ in $T$ is the same in both cases, and it implies that $P_a'$ and $P_b'$ attach to different sides of $P_{ij}$ in the embedding. Let $x$ be the last vertex on $P_i'$ that lies on $Q$, and let $y$ be the last vertex on $P_j'$ that lies on $Q$. Figure~\ref{fig:disconnected} illustrates the resulting situation.

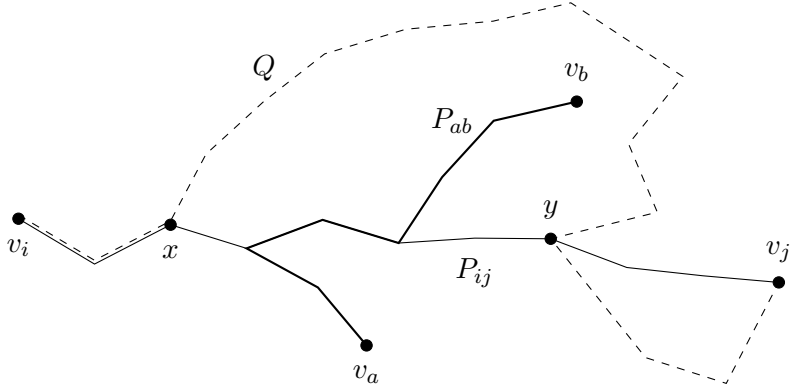
\begin{figure}
    \centering
    \begin{tikzpicture}[vertex/.style={inner sep=1.5pt,circle,draw,fill}]
	\pgfdeclarelayer{lines}
	\pgfdeclarelayer{dots}
	\pgfdeclarelayer{labels}
	\pgfdeclarelayer{bg}
	\pgfsetlayers{bg,lines,dots,labels}
	\pgfmathsetseed{12345678}
    
    \begin{pgfonlayer}{dots}
	    
	    \foreach \i in {0,...,10}
	         \node[coordinate] (p\i) at ($ (\i,0) + rand*(0,0.5)$){};
	   \node[vertex] (vi) at (p0) {};
	   \node[vertex] (vj) at (p10) {};
	   \node[vertex] (x) at (p2) {};
	   \node[vertex] (y) at (p7) {};
	   
	   \node[coordinate] (a0) at (p3){};
	    \foreach \i in {1,2}
	         \node[coordinate] (a\i) at ($ (a0) + (-50:\i) + rand*(40:0.5)$){};
	    \node[vertex] (va) at (a2){};

	   \node[coordinate] (b0) at (p5){};
	    \foreach \i in {1,2,3}
	         \node[coordinate] (b\i) at ($ (b0) + (40:\i) + rand*(-50:0.5)$){};
	    \node[vertex] (vb) at (b3){};

	    \foreach \i in {0,1,2}
	         \node[coordinate] (q\i) at ($ (p\i) + (0,0.05)$){};
	    \foreach \i in {3,...,9}
	         \node[coordinate] (q\i) at ($ (q2) + (90-8*\i:\i-2) + rand*(-8*\i:0.3)$){};
	    \foreach \i in {10,11}
	         \node[coordinate] (q\i) at ($ (q8) + (20-8*\i:\i-8) + rand*(-70-8*\i:0.3)$){};
	   \node[coordinate] (q12) at (p7){};
	    \foreach \i in {13,14}
	         \node[coordinate] (q\i) at ($ (q12) + (8*\i-150:\i-11) + rand*(8*\i-60:0.3)$){};
	   \node[coordinate] (q15) at (p10){};
	         
	\end{pgfonlayer}
	
	\begin{pgfonlayer}{lines}
	    \draw {(p0) \foreach \i in {1,...,10} {--(p\i)}};
	    \draw[thick] {(a2) \foreach \i in {1,0} {--(a\i)}--(p4) \foreach \i in {0,...,3} {--(b\i)}};
	    \draw[dashed] {(q0) \foreach \i in {1,...,15} {--(q\i)}};
	\end{pgfonlayer}
	
	\begin{pgfonlayer}{bg}
	\end{pgfonlayer}
	
	\begin{pgfonlayer}{labels}
	    \node [label={below:$v_i$}] at (vi) {};
	    \node [label={above:$v_j$}] at (vj) {};
	    \node [label={below:$v_a$}] at (va) {};
	    \node [label={above:$v_b$}] at (vb) {};
	    \node [label={below:$x$}] at (x) {};
	    \node [label={above:$y$}] at (y) {};
	    \node [label={above:$Q$}] at (q4) {};
	    \node [label={left:$P_{ab}$}] at (b2) {};
	    \node [label={below:$P_{ij}$}] at (p6) {};
	\end{pgfonlayer}
    \end{tikzpicture}
    \caption{Embedding of the paths $Q$, $P_{ij}$ and $P_{ab}$ in the proof of Theorem \ref{thm:main}. Note that $P_{ij}$ and $P_{ab}$ are both contained in the tree $T$ and thus must intersect in a common subpath.}
    \label{fig:disconnected}
\end{figure}

Let $P_{xy}$ and $Q_{xy}$ be the subpaths of $P_{ij}$ and $Q$ connecting $x$ and $y$, respectively. Note that $P_{xy} \subseteq P_i \cup P_j \cup P_{ab}$. Further note that $P_{ab} \subseteq P_a \cup P_b$, so $Q$ is disjoint from $P_{ab}$. Moreover, by definition $Q \cap P_i \cap P_{xy} = \{x\}$ and  $Q \cap P_j \cap P_{xy} = \{y\}$, and thus $P_{xy}$ and $Q_{xy}$ only meet in their endpoints. Consequently $P_{xy} \cup Q_{xy}$ is a cycle. The vertices $x$ and $y$ are contained in $Q$ and thus not in $P_{ab}$, whence $P_a'$ and $P_b'$ attach to different sides of $P_{xy}$ in the embedding. Since neither of them crosses $P_{xy}$ or $Q_{xy}$ we conclude that the cycle $P_{xy} \cup Q_{xy}$ surrounds either $v_a$ or $v_b$, which yields the desired contradiction and thus proves our claim.

From now on we say that $v \in B_{r_0}(D)$ lies \emph{between $a$ and $b$} if it lies in the same component of $H$ as some element of $[a,b]$. By the above claim, no vertex is between $a$ and $b$ and between $b$ and $a$ simultaneously, but there may be vertices in $B_{r_0}(D)$ which are neither between $a$ and $b$ nor between $b$ and $a$. Clearly, any such vertex is contained in a finite component of $\hat G - (P_a \cup P_b)$, since any path connecting it to $\hat G - B_{r_0}(D)$ must cross either $P_a$ or $P_b$.

We say that \R is \emph{trapped between $a$ and $b$} at time $j$, if $r_j$ lies between $a$ and $b$, and $P_a$ and $P_b$ are guarded at time $j$. Note that in this case, \R will remain trapped between $a$ and $b$ at time $j+1$ unless either $r_{j+1} \notin B_{r_0}(D)$, or $r_{j+1} \in P_a \cup P_b$ (in which case \C wins the game), or \C changes the strategy and stops guarding $P_a$ or $P_b$.

We now inductively define for every integer $t \leq \frac{D}{|V|} - 1$ a value $n_t \in \mathbb N$, such that one of the following two statements holds.
\begin{enumerate}[label=(\Roman*)]
    \item \label{itm:copswin} \C has won the game before time $n_t$, or has a strategy to win starting from the position at time $n_t$.
    \item \label{itm:induct} There are $a_t,b_t\in \mathbb N$ such that
    \begin{enumerate}
        \item $1 \leq b_t-a_t \leq 1 + 2^{-(t+1)} \cdot l$, and
        \item \R is trapped between $a_t$ and $b_t$ at time $n_t \leq j \leq n_{t+1}$.
    \end{enumerate}
\end{enumerate}

Essentially, this is achieved using Lemma \ref{lem:guardpath} to inductively guard $P_y$ (for some appropriate $y$), thus trapping \R, see Figure \ref{fig:generalsituation}. We point out that the values $n_t$ are not determined a priori, but depend on how the game evolves. In particular, different strategies of \R may lead to different values for $n_t$ on the same graph. Throughout the induction, we will also show that $d(r_0,r_{n_t}) \leq (t+1)\cdot |V|$, in order to make sure that $r_{n_t} \in B_{r_0}(D)$.

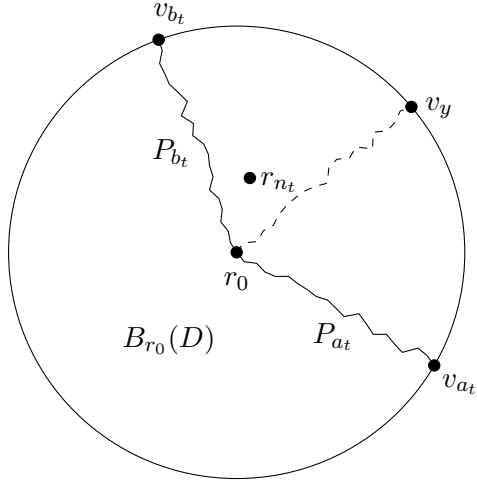
\begin{figure}
    \centering
    \begin{tikzpicture}[vertex/.style={inner sep=1.5pt,circle,draw,fill}]
	\pgfdeclarelayer{lines}
	\pgfdeclarelayer{dots}
	\pgfdeclarelayer{labels}
	\pgfsetlayers{lines,dots,labels}
	\pgfmathsetseed{1234567}
	
	\begin{pgfonlayer}{dots}
	    \node[vertex] (r0) at (0,0){};
	    \node[vertex] (van) at (-30:3){};
	    \foreach \i in {1,...,19} 
	        \node[coordinate] (va\i) at ( $(-30:3/20*\i) + rand*(60:.1)$){};
	    \node[vertex] (vbn) at (110:3){};
	    \foreach \i in {1,...,19} 
	        \node[coordinate] (vb\i) at ( $(110:3/20*\i) + rand*(200:.1)$){};
	    \node[vertex] (vy) at (40:3){};
	    \foreach \i in {1,...,19} 
	        \node[coordinate] (vy\i) at ( $(40:3/20*\i) + rand*(130:.1)$){};
	    \node[vertex] (rn) at (80:1){};
	\end{pgfonlayer}
	
	\begin{pgfonlayer}{lines}
	    \draw circle (3);
	    \draw {(0,0) \foreach \i in {1,...,19} {-- (va\i)} -- (van)};
	    \draw (0,0) \foreach \i in {1,...,19} {-- (vb\i)} -- (vbn);
	    \draw[dashed](0,0) \foreach \i in {1,...,19} {-- (vy\i)}-- (vy);
	\end{pgfonlayer}

	\begin{pgfonlayer}{labels}
	    \node [label={[label distance=2.8cm]100:$v_{b_t}$}] at (r0) {};
	    \node [label={[label distance=2.8cm]-30:$v_{a_t}$}] at (r0) {};
	    \node [label={[label distance=2.7cm]35:$v_{y}$}] at (r0) {};
	    \node [label={[label distance=1cm]-43:$P_{a_t}$}] at (r0) {};
	    \node [label={[label distance=.95cm]115:$P_{b_t}$}] at (r0) {};
	    \node [label={[label distance=.55cm]75:$r_{n_t}$}] at (r0) {};
	    \node [label={[label distance=0cm]270:$r_0$}] at (r0) {};
	    \node [label={[label distance=.7cm]260:$B_{r_0}(D)$}] at (r0) {};
	\end{pgfonlayer}
	
    \end{tikzpicture}
    \caption{Situation at time $j$ for $n_t \leq j \leq n_{t+1}$: $P_{a_t}$ and $P_{b_t}$ are guarded and \R is trapped between $a_t$ and $b_t$. By guarding a shortest $r_0$--$v_y$-path $P_y$~(dashed) we make sure that \R is trapped either between $a_t$ and $y$, or between $y$ and $b_t$.}
    \label{fig:generalsituation}
\end{figure}

To start the inductive construction, let $y = \lfloor \frac {l}{2} \rfloor$. By Lemma \ref{lem:guardpath} there is a strategy for \C to make sure that $P_0$ is $1$-guarded at all times $j>m$ and $d(r_0,r_m) \leq |V|$ for some appropriate $m$. Analogously there is a strategy to make sure that $P_y$ is $2$-guarded at all times $j>m'$ and $d(r_0,r_{m'}) \leq |V|$ for some appropriate $m'$. Since those two strategies don't interfere with each other, we have a strategy ensuring that both $P_0$ and $P_y$ are guarded for $j \geq n_0$ and $d(r_0,r_{n_0}) \leq |V|$, where $n_0 := \max(m,m')$.

Let $H$ be the graph obtained from $\hat G$ by removing $P_{0}$ and $P_{y}$. If $r_{n_0} \notin H$, then $r_{n_0} \in P_0 \cup P_y$ and \C has won the game already. If $r_{n_0}$ is in a finite component of $H$, then \C has a winning strategy by Lemma \ref{lem:planarwin}. In both of these cases \ref{itm:copswin} holds. Thus we can assume that $r_{n_0}$ lies in an infinite component $C$ of $H$. Since $r_{n_0} \in B_{r_0}(|V|)$, there must be a path in $C \cap B_{r_0}(|V|)$ connecting $r_{n_0}$ to either $[0,y]$ or $[y,l]$ and thus at time $n_0$ \R is trapped either between $0$ and $y$ or between $y$ and $l$. In the first case choose $a_0 = 0$ and $b_0 = y$, in the second case choose $a_0 = y$ and $b_0 =l$. In both cases it is straightforward to check that \ref{itm:induct} holds. 

For the induction step assume that we have defined $n_{t-1}$ as claimed. If \ref{itm:copswin} holds, then we can define $n_{t} = n_{t-1}$ and \ref{itm:copswin} still holds. So let us assume that \ref{itm:induct} holds for $n_{t-1}$. Let $y = \lfloor \frac {a_{t-1}+b_{t-1}}2 \rfloor$, let $i \in \{1,2,3\}$ be such that neither $P_{a_{t-1}}$ nor $P_{b_{t-1}}$ is $i$-guarded. Lemma \ref{lem:guardpath} provides us with a strategy such that for an appropriate $n_t$ we have that $P_y$ is $i$-guarded at all times $j \geq n_t$, and $d(r_0,r_{n_t}) \leq d(r_0,r_{n_t}) + |V| \leq (t+1) \cdot |V|$.

Let $H$ be the graph obtained from $\hat G$ by removing $P_{a_{t-1}}$, $P_{b_{t-1}}$ and $P_y$. As before, if $r_{n_t} \notin H$, then \C has won the game and \ref{itm:copswin} holds. If $r_{n_t}$ is contained in a finite component of $H$, then removing two of the three paths from $G$ already leaves it in a finite component (because $\hat G$ is planar and $P_{a_{t-1}}$, $P_{b_{t-1}}$ and $P_y$ pairwise don't cross in the embedding). Consequently, \C has a winning strategy in this situation by Lemma \ref{lem:planarwin}. Finally assume that $r_{n_t}$ is contained in an infinite component $C$ of $H$. For $n_{t-1} \leq j \leq n_t$ the paths $P_{a_{t-1}}$ and $P_{b_{t-1}}$ are guarded at time $j$ and $r_j \in B_{r_0}(D)$. Together with the assumption that \R was trapped between $a_{t-1}$ and $b_{t-1}$ at time $n_{t-1}$, this implies that \R is trapped between $a_{t-1}$ and $b_{t-1}$ at time $n_t$ unless \C has won the game before time $n_t$. Since $P_y$ is also guarded, the same argument as above gives that at time $n_t$ \R is either trapped between $a_{t-1}$ and $y$, or between $y$ and $b_{t-1}$. In the first case take $a_t = a_{t-1}$ and $b_t = y$, in the second case take $a_t = y$ and $b_t=b_{t-1}$. In both cases it is not hard to verify that \ref{itm:induct} is satisfied.

To conclude the proof, we remark that the \ref{itm:induct} can't possibly be satisfied for $t = \frac D{|V|}-1$. Indeed, in this case
\[
2^{-(t+1)} \cdot l < 2^{-\frac D{|V|}} \cdot |B_{r_0}(D)| = 2^{-\frac D{|V|} + \log (|B_{r_0}(D)|)} < 1.
\]
Since $b_t - a_t$ is an integer, it follows that $b_t - a_t = 1$, and thus $[a_t,b_t] = \emptyset$. In particular $r_{n_t}$ cannot lie between $a_t$ and $b_t$. Hence there is some $t \leq \frac D{|V|}-1$, such that \ref{itm:copswin} holds, thus \C has a winning strategy.
\end{proof}

As mentioned in the introduction, Theorem~\ref{thm:main} can be used to make progress on Conjecture~\ref{con:schroeder}. In particular, we have the following result.

\begin{corollary}
\label{cor:schroedercon}
If $G$ is a finite graph of genus $g \leq 3$, then $c(G) \leq g + 3$.
\end{corollary}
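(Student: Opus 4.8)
The plan is to reduce the case $g \leq 3$ to Theorem~\ref{thm:main} by the standard device of deleting a handle. Recall that a graph of genus $g$ can be embedded on the orientable surface $\Sigma_g$ of genus $g$, and that if $g \geq 1$ there is a non-contractible simple closed curve $\gamma$ on $\Sigma_g$ avoiding all vertices of $G$ and crossing the embedding transversally in finitely many interior points of edges; cutting along $\gamma$ and capping off reduces the genus by (at least) one. The combinatorial effect is: there is a set $W \subseteq V(G)$ which is the union of the endpoints of the edges met by $\gamma$, such that $G - W$ (or rather a graph obtained by splitting along $\gamma$) embeds on $\Sigma_{g-1}$. However, a cleaner formulation for the cop-number argument is the one already used implicitly in the literature: if $G$ has genus $g \geq 1$, then there exist two vertices $u,v$ (or a bounded set of vertices) and a pair of paths whose removal lowers the genus, and one extra cop suffices to \emph{guard} that structure. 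Concretely, I would cite or prove the following: if $G$ has genus $g$, one can choose a shortest path $P$ between suitable vertices (or just a single well-chosen vertex/edge) such that $G$ minus the guarded region retracts to a graph of genus $g-1$, and by Lemma~\ref{lem:guardpath} (with $T$ equality) one cop can be dedicated to guarding $P$ after finitely many moves, while the cops on the lower-genus piece chase the robber.

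The cleanest route, and the one I expect the paper takes, uses the \emph{retract} formulation of guarding: if a subgraph $H$ of $G$ is guarded by one cop from some time on, then effectively the robber is confined to playing in $G$ with $H$ contracted/removed, and a winning strategy for $c-1$ cops on the reduced graph lifts to a winning strategy for $c$ cops on $G$ (this is exactly how Aigner--Fromme argue in the planar case, and it is the content packaged in condition \ref{itm:twopaths} and Lemma~\ref{lem:planarwin} / Lemma~\ref{lem:guardpath}). So the argument for Corollary~\ref{cor:schroedercon} runs: for $g = 0$ use Aigner--Fromme (cop number $\leq 3 = g+3$); for $g = 1$ use Theorem~\ref{thm:main} ($\leq 3 \leq g+3$, in fact better); for $g = 2$ and $g = 3$, take a genus-$g$ embedding, pick a non-separating non-contractible cycle $C$ (which exists since $g \geq 1$), let $P$ be a shortest such cycle or a shortest path realizing the reduction, use Lemma~\ref{lem:guardpath} to commit one cop to guarding (a shortest path contained in) $C$, and observe that once the robber is confined to one side, the residual graph the robber can reach has genus $\leq g-1$. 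Then induct: the remaining $g+2$ cops win on the residual genus-$(g-1)$ graph by the inductive hypothesis ($c \leq (g-1)+3 = g+2$), and the guarding cop prevents escape. This gives $c(G) \leq 1 + (g-1) + 3 = g+3$. The base of the induction is $g = 1$, supplied by Theorem~\ref{thm:main}.

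I would organize the write-up as: (1) state and prove (or invoke) the topological fact that a genus-$g$ graph ($g\geq1$) contains a cycle $C$ whose "deletion with guarding" drops the genus — more precisely, that there is a shortest path $P$ (possibly a cycle, handled as two paths sharing endpoints, or via a BFS-tree argument as in the proof of Theorem~\ref{thm:main}) such that every component of $G$ minus the guarded set, together with the guarding structure, embeds on a surface of genus $g-1$; (2) note that by Lemma~\ref{lem:guardpath} one cop can guard $P$ from some finite time $m$ onward, without interfering with the other cops; (3) after time $m$, the robber is trapped in a single such component (if he is on $P$ he is caught), and that component has genus $\leq g-1$; (4) apply the inductive hypothesis on that component with the remaining $g+2$ cops. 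The main obstacle is step (1): making precise the sense in which "guarding a path/cycle lets you treat the rest as a lower-genus graph." The subtlety is that guarding a path is not literally deleting it — the robber may still sit on $P$, and components of $G \setminus P$ may be glued to $P$ in complicated ways — so one must argue, exactly as in Lemma~\ref{lem:planarwin}, that the robber is confined to one face/one side, and that the closure of that region carries a genus-$(g-1)$ embedding. For non-separating curves the two "sides" are actually one connected piece, so one has to be careful: the right statement is that cutting $\Sigma_g$ along $C$ yields $\Sigma_{g-1}$ with one or two boundary circles which are then capped, and the robber's reachable subgraph embeds there. Handling the possibly-separating versus non-separating cases, and the parallel between $g=2$ (promised to be "simpler" than the previous proof) and $g=3$ (new), is where the real work lies; everything else is a routine application of Lemma~\ref{lem:guardpath} and induction.
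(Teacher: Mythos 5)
There is a genuine gap, and it sits exactly where you locate ``the real work'': your inductive step assumes that \emph{one} cop can guard a structure (a shortest non-contractible cycle, or ``a shortest path contained in'' it) whose removal lowers the genus by one. That is not something Lemma~\ref{lem:guardpath} gives you: that lemma only lets a single cop guard a \emph{shortest $u$-$v$-path}, and a shortest non-contractible cycle is not a geodesic path --- in general it takes \emph{two} cops (one per half of the cycle) to guard it, which is precisely Quilliot's genus-reduction strategy. Moreover, if your claim were true, your induction would prove $c(G)\le g+3$ for \emph{every} genus $g$ (base case planar, one extra cop per handle), i.e.\ the full Conjecture~\ref{con:schroeder}, which the paper stresses is open for $g\ge 4$ and for which the best known general bound is $\tfrac43 g+\tfrac{10}3$. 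So the step ``one cop confines the robber to a genus-$(g-1)$ piece'' cannot be taken for granted; it is the open heart of the problem, not a routine application of Lemma~\ref{lem:guardpath}.

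The paper's actual proof avoids this entirely by quoting the known genus-reduction results as a black box (Lemma~\ref{lem:genusreduction}): Quilliot's strategy reduces the genus by $1$ using $2$ cops, and Schroeder's Proposition~3.2 reduces the genus either by $1$ using $1$ cop \emph{or} by $2$ using $3$ cops --- the one-cop option is only available in certain configurations, which is why it comes packaged as a dichotomy. The corollary is then pure bookkeeping: $g\le 1$ is Theorem~\ref{thm:main}; for $g=2$ spend $2$ cops to get a toroidal graph and finish with $3$ more ($5=g+3$); for $g=3$ apply the dichotomy, landing either in the $g=2$ case with one cop spent or in the toroidal case with three cops spent ($6=g+3$ either way). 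Your write-up would be salvageable only by replacing your step (1) with these cited results, at which point it becomes the paper's proof; as written, the $1+(g-1)+3$ accounting rests on an unproved (and, beyond special cases, unknown) one-cop guarding claim.
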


We remark that the cases $g\leq 2$ were previously known, see \cite{aignerfromme-planar,schroeder-genus}, so our only real contribution to Corollary~\ref{cor:schroedercon} is the case $g=3$. We still prove all cases for convenience. We say that a strategy of \C \emph{reduces the genus by $r$ using $s$ cops}, if it yields $i$-guarded subgraphs $H_i$ of $G$ for $1 \leq i \leq s$ such that the genus of the graph obtained from $G$ by removing all $H_i$ is at most $g-s$, where $g$ is the genus of $G$. Using this notation, we have the following result.

\begin{lemma}
\label{lem:genusreduction}
Assume that we play \CnR with $k \geq 4$ cops. Then
\begin{enumerate}
    \item \C has a strategy reducing the genus by 1 using 2 cops, and
    \item \C has a strategy reducing the genus either by 1 using 1 cop, or by 2 using 3 cops.
\end{enumerate}
\end{lemma}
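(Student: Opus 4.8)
The plan is to use the classical ``guard a geodesic and cut along a non-contractible cycle'' machinery implicit in \cite{aignerfromme-planar,schroeder-genus}. Two tools are needed. First, the equality-relation case of Lemma~\ref{lem:guardpath}: taking $\hat G=G$ (or, once the robber has been confined to an induced subgraph $H$ of $G$, taking $H$ in place of $\hat G$, exactly as in the proof of Lemma~\ref{lem:planarwin}), a single cop can after finitely many moves $i$-guard any shortest path --- of $G$, respectively of $H$. Second, a few standard facts about a connected graph $G$ embedded with minimum (hence cellular) genus $g$ in an orientable surface $S$: every spanning tree has a non-contractible fundamental cycle (the fundamental cycles generate the cycle space, while the face boundaries span a subspace of positive codimension, namely $2g$); if $C$ is a non-contractible cycle then every component of $G-V(C)$ has genus at most $g-1$ (it embeds in $S$ cut along $C$ with the new boundary capped by disks); and, more generally, if in addition $C'$ is a cycle of $G-V(C)$ that is non-contractible in that cut-and-capped surface, then every component of $G-V(C\cup C')$ has genus at most $g-2$.

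\textbf{Part (1).} Root a BFS tree $T$ at an arbitrary vertex, choose a non-tree edge $e=xy$ whose fundamental cycle $C=T+e$ is non-contractible, let $z$ be the vertex at which the $T$-paths to $x$ and to $y$ split, and put $P_1=zTx$, $P_2=zTy$. These are shortest paths of $G$ and $V(C)=V(P_1)\cup V(P_2)$. Cop $1$ guards $P_1$ and cop $2$ guards $P_2$; by Lemma~\ref{lem:guardpath} their strategies do not interfere, and once both are in force the robber can never occupy a vertex of $V(C)$. Setting $H_1=P_1$ and $H_2=P_2$, we have $G-(H_1\cup H_2)=G-V(C)$, every component of which has genus at most $g-1$. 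This is a genus reduction by $1$ using $2$ cops.

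\textbf{Part (2).} First run the construction of Part~(1) to obtain $C$, $P_1$, $P_2$. If $G-V(C)$ already has all components of genus at most $g-2$, then $H_1=P_1$, $H_2=P_2$, $H_3=\emptyset$ is a reduction by $2$ using at most $3$ cops and we are done; so assume some component of $G-V(C)$ still has genus $g-1\ge 1$ (the cases $g\le 1$ being covered by Theorem~\ref{thm:main}). The naive continuation --- guard $P_1,P_2$ to pin the robber into a component $H'$ of $G-V(C)$, then guard the two BFS-geodesics of a non-contractible fundamental cycle of $H'$ to destroy a second handle --- works but uses $4$ cops, so the point is to save one. The plan is to instead work with a single BFS tree $T$ of $G$ and to look for two non-tree edges $e_1=x_1w$ and $e_2=x_2w$ sharing an endpoint $w$ whose fundamental cycles $C_1,C_2$ are \emph{jointly} genus-reducing by $2$ (i.e.\ $C_2$ is still non-contractible after cutting along $C_1$). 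Then $V(C_1)\cup V(C_2)$ is contained in the union of the three shortest paths from the root to $x_1$, to $x_2$, and to $w$; guarding these three paths with three cops confines the robber so that every component of $G$ minus the three paths has genus at most $g-2$. If no such pair of edges exists, the plan is to show that this very obstruction forces some non-contractible cycle of $G$ to have all of its vertices on a single root-to-vertex geodesic $P$ of $T$; guarding $P$ with one cop then makes every component of $G-V(P)$ have genus at most $g-1$, a reduction by $1$ using $1$ cop.

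\textbf{Main obstacle.} The cop-and-robber part of the argument --- guarding shortest paths, confining the robber to a region, and re-applying Lemma~\ref{lem:guardpath} inside that region --- is routine and parallels the proof of Lemma~\ref{lem:planarwin}. The delicate step is the dichotomy inside Part~(2): showing that whenever the special ``three-cop'' configuration of two endpoint-sharing fundamental cycles that jointly kill two handles is unavailable, some non-contractible cycle must live on a single geodesic. This is a purely topological statement about spanning trees of graphs embedded in surfaces, and it has to be arranged so the genus bookkeeping is exactly ``$-1$ with one cop, or $-2$ with three cops'', which is precisely the trade-off Corollary~\ref{cor:schroedercon} needs in order to reach $c(G)\le g+3$ for all $g\le 3$.
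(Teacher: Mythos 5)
For this lemma the paper offers no argument of its own: part (1) is attributed to Quilliot and part (2) is exactly Proposition~3.2 of Schroeder, so any self-contained proof has to reconstruct those results. Your Part~(1) is essentially a correct reconstruction of the standard argument: a fundamental cycle of a BFS tree that is non-null-homologous exists when $g\geq 1$ (your dimension count actually gives non-separating, not just non-contractible, which is what you should use -- cutting along a \emph{separating} non-contractible cycle leaves two pieces whose genera sum to $g$, so the genus of the whole remaining graph need not drop, only the genus of each component; with a homologically nontrivial cycle the cut surface has genus $g-1$ and the issue disappears), and the two tree-paths forming it are shortest paths of $G$, guardable by Lemma~\ref{lem:guardpath} with $T$ the equality relation. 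That part I would accept after tightening the separating/non-separating point.

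Part~(2), however, has a genuine gap, and you flag it yourself in your final paragraph: the entire content of the statement is the dichotomy ``either there are two fundamental cycles sharing an endpoint whose union is covered by three root-geodesics and which jointly reduce the genus by $2$, or some non-contractible cycle lies on a single geodesic of $G$,'' and this is asserted as a plan rather than proved. Nothing in the proposal shows that the failure of the three-cop configuration forces a non-contractible cycle onto one shortest path; it is not even clear that this particular dichotomy is true as stated, and Schroeder's actual proof of his Proposition~3.2 proceeds differently (via a carefully chosen non-contractible cycle and a case analysis on how it can be guarded). Since the ``$-1$ with one cop, or $-2$ with three cops'' trade-off is precisely what Corollary~\ref{cor:schroedercon} needs for $g=3$, the proposal as it stands does not establish the lemma: you would either have to supply a full proof of the dichotomy (expect this to be the bulk of the work, comparable to Schroeder's argument) or simply cite \cite{quilliot-genus} and \cite{schroeder-genus} as the paper does.
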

\begin{proof}
The first part is implicit in \cite{quilliot-genus}, the second part is Proposition 3.2 in \cite{schroeder-genus}.
\end{proof}

\begin{proof}[Proof of Corollary \ref{cor:schroedercon}]
For $g=0$ and $g=1$ this follows directly from Theorem \ref{thm:main} (note that any planar graph can be embedded in the torus). For $g=2$ apply the first part of Lemma \ref{lem:genusreduction}, then apply Theorem \ref{thm:main} to the resulting toroidal graph. For $g=3$, first apply the second part of Lemma \ref{lem:genusreduction}. If the strategy used 1 cop to reduce the genus by 1, then apply Corollary \ref{cor:schroedercon} for $g=2$ to the remaining graph, otherwise apply Theorem~\ref{thm:main}.
\end{proof}

\section*{Acknowledgements}
The author thanks J\'er\'emie Turcotte and the anonymous referees for pointing out a flaw in the proof of Theorem~\ref{thm:main} in an earlier version of this paper and suggesting various valuable improvements.

\bibliographystyle{abbrv}
\bibliography{bibliography}

\end{document}